\DeclareMathOperator{\Widim}{Widim}
\DeclareMathOperator{\mdim}{mdim}
\DeclareMathOperator{\ord}{ord}
\DeclareMathOperator{\diam}{diam}
\DeclareMathOperator{\mesh}{mesh}
\DeclareMathOperator{\vol}{vol}
\newtheorem{theorem}{Theorem}[section]
\newtheorem{lemma}[theorem]{Lemma}
\newtheorem{proposition}[theorem]{Proposition}
\theoremstyle{definition}
\theoremstyle{remark}
\numberwithin{equation}{section}
\begin{document}

\title[Directional mean dimension]{Directional mean dimension and continuum-wise expansive $\mathbb{Z}^k$-actions}

\title[Directional mean dimension]{Directional mean dimension and continuum-wise expansive $\mathbb{Z}^k$-actions}

\author[Sebasti\'an Donoso]{Sebasti\'an Donoso}
\address{Center for Mathematical Modeling and Department of Mathematical Engineering, University of Chile and IRL 2807 - CNRS}
\email{sdonoso@dim.uchile.cl}
\thanks{S.D. was supported by Centro de Modelamiento Matemático (CMM), ACE\textsf{210010} and FB\textsf{210005}, BASAL funds for centers of excellence from ANID-Chile, and ANID/Fondecyt/\textsf{1200897}.}

\author[Lei Jin]{Lei Jin}
\address{Center for Mathematical Modeling, University of Chile and IRL 2807 - CNRS}
\email{jinleim@impan.pl}
\thanks{ L.J. was supported by Basal Funding AFB \textsf{170001} and ANID/Fondecyt/\textsf{3190127}, and was partially supported by NNSF of China No. \textsf{11971455}.}

\author[Alejandro Maass]{Alejandro Maass}
\address{Center for Mathematical Modeling and Department of Mathematical Engineering, University of Chile and IRL 2807 - CNRS}
\email{amaass@dim.uchile.cl}
\thanks{A.M. was supported by Centro de Modelamiento Matemático (CMM), ACE\textsf{210010} and FB\textsf{210005}, BASAL funds for centers of excellence from ANID-Chile.}

\author[Yixiao Qiao]{Yixiao Qiao}
\address{School of Mathematics and Statistics, Guangdong University of Technology, Guangzhou, 510006, China}
\email{yxqiao@gdut.edu.cn}
\thanks{Y.Q. was supported by NNSF of China No. \textsf{11901206}.}

\subjclass[2010]{37B05.}\keywords{Directional mean dimension; Continuum-wise expansive $\mathbb{Z}^k$-action.}

\date{\today}


\commby{}

\begin{abstract}
We study directional mean dimension of $\mathbb{Z}^k$-actions (where $k$ is a positive integer). On the one hand, we show that there is a $\mathbb{Z}^2$-action whose directional mean dimension (considered as a $[0,+\infty]$-valued function on the torus) is not continuous. On the other hand, we prove that if a $\mathbb{Z}^k$-action is continuum-wise expansive, then the values of its $(k-1)$-dimensional directional mean dimension are bounded. This is a generalization (with a view towards Meyerovitch and Tsukamoto's theorem on mean dimension and expansive multiparameter actions) of a classical result due to Ma\~n\'e: Any compact metrizable space admitting an expansive homeomorphism (with respect to a compatible metric) is finite-dimensional.
\end{abstract}

\maketitle


\section{Introduction}
The notion of expansiveness (or unstable homeomorphisms), introduced in 1950 by Utz \cite{Utz}, is a dynamical property shared by a class of systems exhibiting chaotic behaviors. Let $(X,d)$ be a compact metric space. A homeomorphism $T\colon X\to X$ is said to be {\em expansive} if there is a constant $c>0$ satisfying that for any distinct points $x,x^\prime\in X$ there exists $n\in\mathbb{Z}$ such that $d(T^nx,T^nx^\prime)>c$. In 1979, Ma\~n\'e \cite{Mane} established a fairly surprising result which states that any compact metrizable space admitting an expansive homeomorphism (with respect to a compatible metric) is finite dimensional. In contrast to $\mathbb{Z}$-actions, there do exist expansive $\mathbb{Z}^2$-actions on infinite dimensional compact metric spaces \cite{Meyerovitch--Tsukamoto,Shi--Zhou}. Such examples seem to indicate that Ma\~n\'e's theorem cannot be extended to $\mathbb{Z}^k$-actions, for $k\ge2$ or to more general group actions. However, Meyerovitch and Tsukamoto \cite{Meyerovitch--Tsukamoto} succeeded in finding a reasonable framework to study extension of Ma\~n\'e's results, with a view towards {\em mean dimension}.

Mean dimension was introduced by Gromov \cite{Gromov} in 1999. It is a topological invariant of dynamical systems, whose advantage has been shown excellently in the study of infinite dimensional and infinite topological entropy systems. We will review its definition in Subsection \ref{subsec:mdim}. It is worth mentioning that mean dimension has deep and significant applications to dynamical systems. In particular, it is intimately connected with the \textit{embedding problem}. For related results we refer to \cite{Gutman--Lindenstrauss--Tsukamoto,Gutman--Qiao--Tsukamoto,Gutman--Tsukamoto,Lindenstrauss,Lindenstrauss--Tsukamoto1,Lindenstrauss--Tsukamoto2,Lindenstrauss--Weiss}.

Meyerovitch and Tsukamoto \cite{Meyerovitch--Tsukamoto} provided a striking generalization of Ma\~n\'e's theorem to $\mathbb{Z}^k$-actions. They showed that if $(X,\mathbb{Z}^k,T)$ is an expansive $\mathbb{Z}^k$-action (where $k$ is a positive integer) and if a $\mathbb{Z}^{k-1}$-action $(X,\mathbb{Z}^{k-1},R)$ satisfies that $R\colon \mathbb{Z}^{k-1}\times X\to X$ commutes with $T\colon \mathbb{Z}^k\times X\to X$, then $(X,\mathbb{Z}^{k-1},R)$ has finite mean dimension. Note that this statement reduces to Ma\~n\'e's result when $k=1$, since a trivial action has finite mean dimension if and only if the space is finite dimensional. Moreover, they introduced the notion of \textit{directional mean dimension} \cite{Meyerovitch--Tsukamoto}, which was initially suggested by Lind, mimicking the definition of directional entropy in \cite{Boyle--Lind}. Directional mean dimension is able to measure the ``averaged dimension'' of a dynamical system along a given subspace or direction\footnote{We shall use the word ``direction" even if the subspace is not one dimensional}. We state its precise definition (for $\mathbb{Z}^k$-actions) in Section 3. This is the starting point of what we plan to investigate in the present paper. Our motivation lies mainly in two aspects, which we explain shortly as follows.

For a $\mathbb{Z}^2$-action, let us consider its directional mean dimension as a $[0,+\infty]$-valued function on the torus. An immediate and natural problem is if such a function is \textit{continuous}. We remark that if this is true, then Meyerovitch and Tsukamoto's theorem will imply that if a $\mathbb{Z}^2$-action is expansive then its directional mean dimension along any direction is finite (because a continuous $[0,+\infty]$-valued function on a compact metric space is bounded by a finite real number provided it is finite at some point). However, it turns out that we cannot expect directional mean dimension functions to have such a strong property. We will construct a $\mathbb{Z}^2$-action whose directional mean dimension function fails to be continuous on the torus.

Let $\mathbb{S}=\{\vec{v}=(v_1,v_2)\in\mathbb{R}^2:v_1^2+v_2^2=1\}$. For a $\mathbb{Z}^2$-action $(X,\mathbb{Z}^2,T)$ we denote by $\mdim(X,T,\langle\vec{v}\rangle^{\perp})$ the directional mean dimension of $(X,\mathbb{Z}^2,T)$ with respect to the $1$-dimensional subspace of $\mathbb{R}^2$ orthogonal to a direction $\vec{v}\in\mathbb{S}$.
\begin{theorem}\label{thm:discontinuity}
For any $0\le \rho \le+\infty$ there is a $\mathbb{Z}^2$-action $(X,\mathbb{Z}^2,T)$ such that
\begin{itemize}
\item if $\vec{v}\in\{(-1,0),(1,0)\}$ then $\mdim(X,T,\langle\vec{v}\rangle^{\perp})=\rho$;
\item if $\vec{v}\in\mathbb{S}\setminus\{(-1,0),(1,0)\}$ then $\mdim(X,T,\langle\vec{v}\rangle^{\perp})=0$.
\end{itemize}
\end{theorem}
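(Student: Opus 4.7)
The plan is to produce, for each $\rho\in[0,+\infty]$, an explicit $\mathbb{Z}^2$-action with the prescribed directional mean dimension profile. For $\rho=0$ take $X$ to be a single point. For $\rho\in(0,+\infty)$, by Lindenstrauss--Weiss choose a $\mathbb{Z}$-system $(Y,S)$ with $\mdim(Y,S)=\rho$; the case $\rho=+\infty$ is handled by an infinite product construction with $\mdim\to+\infty$. On $X=Y$ I put the simplest compatible $\mathbb{Z}^2$-action, $T^{(a,b)}=S^b$, so that $T$ factors through the projection $\mathbb{Z}^2\to\mathbb{Z}$, $(a,b)\mapsto b$.

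The vertical equality is essentially built into the construction. For $\vec{v}\in\{(-1,0),(1,0)\}$ one has $\langle\vec{v}\rangle^\perp=\mathbb{R}(0,1)$, whose intersection with $\mathbb{Z}^2$ is $\{(0,b):b\in\mathbb{Z}\}$ and along which $T$ restricts to $(Y,S)$. Unwrapping the definition of directional mean dimension recorded in Section 3, this yields $\mdim(X,T,\langle\vec{v}\rangle^\perp)=\mdim(Y,S)=\rho$.

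The heart of the argument is the vanishing of $\mdim(X,T,\langle\vec{v}\rangle^\perp)$ for all other $\vec{v}\in\mathbb{S}$. Writing $V=\langle\vec{v}\rangle^\perp$ and choosing a unit vector $(u_1,u_2)$ spanning $V$, the hypothesis forces $u_1\neq 0$, i.e.\ $V$ is transverse to the vertical axis. The key observation is that $T^{(a,b)}$ depends only on $b$, so the $\Widim$-type quantities entering $\mdim(X,T,V)$ are controlled by the set of $b$-coordinates of lattice points in a ``tube'' around $V$, with $S$ as the ambient $\mathbb{Z}$-dynamics on $Y$. A careful accounting of the tube geometry, combined with the normalization used in the definition of directional mean dimension, should then yield zero in the limit.

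The main obstacle is making this last step rigorous uniformly in $\vec{v}$, especially for slanted (non-horizontal, non-vertical) directions, where the $b$-range of a tube around $V$ is linear in the tube length along $V$. One must show that the definition's normalization suppresses this linear growth. If the simple construction does not achieve this directly, I would modify it by replacing $X$ with $Y\times\Omega$, where $\Omega$ carries a zero mean dimension auxiliary $\mathbb{Z}^2$-action (for instance a suitable Cantor minimal $\mathbb{Z}^2$-subshift), chosen so that the composite action decouples slanted tubes from the vertical $S$-dynamics while preserving the vertical directional mean dimension equal to $\rho$.
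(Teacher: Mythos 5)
Your construction $T^{(a,b)}=S^{b}$ does not satisfy the second bullet of the theorem, and the obstacle you flag at the end is not a technicality that ``careful accounting'' can remove: it is a genuine failure. Fix a slanted direction, i.e.\ $\vec v=(v_1,v_2)$ with $v_1v_2\neq 0$, and let $V=\langle\vec v\rangle^{\perp}$, spanned by the unit vector $(u_1,u_2)=(-v_2,v_1)$. Since $r>\sqrt2/2$, for every integer $b$ in an interval $J_N$ of length $\Theta(|u_2|N/\max(|u_1|,|u_2|))$ the tube $B_r(V)\cap[-N,N]^2$ contains a lattice point with second coordinate $b$; as $T^{(a,b)}$ depends only on $b$, this gives $d^{T}_{B_r(V)\cap[-N,N]^2}=d^{S}_{J_N}$ and hence $\Widim_\epsilon\bigl(X,d^{T}_{B_r(V)\cap[-N,N]^2}\bigr)=\Widim_\epsilon\bigl(Y,d^{S}_{J_N}\bigr)\sim |J_N|\,\rho_\epsilon$, where $\rho_\epsilon\to\rho$ as $\epsilon\to 0$. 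Dividing by $\vol_1(V\cap[-N,N]^2)=\Theta\bigl(N/\max(|u_1|,|u_2|)\bigr)$ yields $\mdim(X,T,\langle\vec v\rangle^{\perp})=|v_1|\rho$ for \emph{every} $\vec v\in\mathbb S$. So your example has positive directional mean dimension in all non-horizontal-subspace directions, and its directional mean dimension function is in fact continuous on $\mathbb S$ --- exactly the opposite of what the theorem is designed to exhibit. The proposed repair also cannot work: for a product $Y\times\Omega$ with the product action, restricting any $\epsilon$-embedding to a slice $Y\times\{\omega_0\}$ shows that $\Widim_\epsilon$ of the product dominates that of the factor $Y$, so taking a product with a zero mean dimension system can never lower the directional mean dimension below the positive value computed above.

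The missing idea is an escape-to-infinity mechanism transverse to the exceptional axis. The paper takes $Y=[0,1]^{\mathbb Z}\times\mathbb Z$, lets one generator of $\mathbb Z^2$ act by the shift $\sigma$ on $[0,1]^{\mathbb Z}$ and the other by translation on the discrete factor $\mathbb Z$, and passes to the one-point compactification $X=Y\cup\{\infty\}$ with $\infty$ fixed. For any direction whose tube has unbounded second (translation) coordinate --- i.e.\ every direction except the pure shift axis --- all but a bounded window of the group elements in the tube map each set $A\times\{i\}$ into the region $[0,1]^{\mathbb Z}\times(\mathbb Z\setminus[-M,M])\cup\{\infty\}$, which is a single element of the relevant cover; consequently $\mathcal D\bigl(\bigvee_{n\in B_r(V)\cap[-N,N]^2}T^{n}\mathcal U\bigr)$ stays bounded in $N$ and the directional mean dimension vanishes. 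Along the exceptional axis the translation component is trivial and the computation reduces to $\mdim([0,1]^{\mathbb Z},\sigma)=\rho$ (after replacing $[0,1]^{\mathbb Z}$ by a system of mean dimension $\rho$). Your construction, in which the transverse coordinate acts trivially rather than wandering to a fixed point, has no such mechanism, and that is precisely what is needed to produce the discontinuity.
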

Theorem \ref{thm:discontinuity} shows in particular that directional mean dimension need \textit{not} depend continuously on directions, but still leaves open if expansive $\mathbb{Z}^2$-actions have finite directional mean dimension for all directions. We shall address this question in a wider context. Indeed, different versions of expansiveness were well explored previously. For instance, there are entropy-expansiveness \cite{Bowen}, continuum-wise expansiveness \cite{Kato}, pointwise expansiveness \cite{Reddy} and positive expansiveness \cite{Schwartzman}. Among all the variants, \textit{continuum-wise expansiveness} attracts a lot of attention. We will recall its definition formally in Subsection \ref{subsec:cwexpansive}. It is worth mentioning that the class of continuum-wise expansive homeomorphisms contains, in addition to expansive homeomorphisms, many important homeomorphisms which are \textit{not} expansive; for example, the shift maps of Knaster's indecomposable chainable continua \cite{Kato}; meanwhile, in contrast to expansive homeomorphisms, there exist continuum-wise expansive homeomorphisms on the pseudo-arc. Kato \cite{Kato} extended Ma\~n\'e's result to the setting of continuum-wise expansive homeomorphisms. A purpose of this paper is to prove that all continuum-wise expansive $\mathbb{Z}^k$-actions (in particular, all expansive $\mathbb{Z}^k$-actions) must have finite directional mean dimension.
\begin{theorem}\label{maintheorem2}
Let $k$ be a positive integer. If a $\mathbb{Z}^k$-action $(X,\mathbb{Z}^k,T)$ is continuum-wise expansive, then the $(k-1)$-dimensional directional mean dimension of $(X,\mathbb{Z}^k,T)$ with respect to any direction is finite. More precisely, they are uniformly bounded by a finite number which depends only on $(X,\mathbb{Z}^k,T)$.
\end{theorem}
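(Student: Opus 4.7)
My plan is to prove Theorem \ref{maintheorem2} by adapting the strategy of Kato (the continuum-wise extension of Ma\~n\'e's theorem when $k=1$) together with the multi-parameter techniques of Meyerovitch and Tsukamoto. The goal is to establish an inequality of the form
\[
\Widim_\varepsilon(X, d_\Omega) \;\le\; C_\varepsilon \cdot |\Omega + K_\varepsilon|
\]
for every finite $\Omega \subset \mathbb{Z}^k$ and every sufficiently small $\varepsilon$, where $d_\Omega(x,y)=\max_{g\in\Omega}d(T^gx,T^gy)$ and $K_\varepsilon \subset \mathbb{Z}^k$ is a finite window depending only on $\varepsilon$ and the system $(X,T)$. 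Once such an inequality is in place, plugging in the F\o{}lner-type sets $\Omega_L$ used in the definition of the $(k-1)$-dimensional directional mean dimension and dividing by $L^{k-1}$ will produce a bound on $\mdim(X,T,V)$ that is uniform in the $(k-1)$-dimensional subspace $V\subset\mathbb{R}^k$.

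The first step translates continuum-wise expansiveness into a combinatorial statement: if $c>0$ is a cw-expansive constant, then for every $\delta>0$ there exists a finite set $K=K(\delta)\subset\mathbb{Z}^k$ such that any continuum $C\subset X$ satisfying $\diam(T^g C)<c$ for all $g\in K$ also satisfies $\diam(C)<\delta$. This is the natural multi-parameter analog of Kato's lemma and follows by a standard compactness-and-contradiction argument from cw-expansiveness. The second step builds a covering of $(X,d_\Omega)$: start with a finite open cover $\mathcal{U}$ of $X$ of mesh less than $\delta$, form the refinement $\bigvee_{g\in\Omega+K}T^{-g}\mathcal{U}$, and then replace each of its elements by its connected components. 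By Step 1 applied at the scale $\varepsilon$, these components have $d_\Omega$-diameter less than $\varepsilon$; a Lebesgue-covering and multiplicity-accounting argument in the spirit of Meyerovitch--Tsukamoto then bounds the order of the resulting cover by a constant multiple of $|\Omega+K|$ and yields the widim inequality above.

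Finally, for any $(k-1)$-dimensional subspace $V\subset\mathbb{R}^k$, the F\o{}lner-type sets $\Omega_L$ used to define $\mdim(X,T,V)$ consist of the lattice points in a fixed-thickness slab around the $L$-ball in $V$, so $|\Omega_L|=O(L^{k-1})$ with a constant bounded independently of $V$, and $|\Omega_L+K_\varepsilon|/|\Omega_L|\to 1$ as $L\to\infty$. Substituting into the widim bound and taking $L\to\infty$ followed by $\varepsilon\to 0$ yields $\mdim(X,T,V)\le C$ for a constant $C$ depending only on $(X,\mathbb{Z}^k,T)$.

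The main obstacle lies in Step 2: cw-expansiveness controls diameters of continua only, not those of arbitrary pairs of close points, so the direct expansive embedding argument of Meyerovitch--Tsukamoto is unavailable. The technical heart is to replace their expansive-metric argument by a Kato-style refinement of open covers into connected components while simultaneously tracking the covering multiplicity on a large but finite window $\Omega+K$ in $\mathbb{Z}^k$; uniformity of the resulting constants in both $\varepsilon$ and $V$, which is needed to pass to $\mdim$ and to conclude independence of direction, is where the most care is required.
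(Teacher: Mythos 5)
Your Step 1 is exactly the paper's Lemma \ref{prop:contexpansive}(2), and the device of passing to connected components and then to a genuine finite closed cover via clopen sets is also how the paper proceeds. The gap is in how the order of the cover is counted, and it is fatal at the final limit. Forming $\bigvee_{g\in\Omega+K_\varepsilon}T^{-g}\mathcal{U}$ and taking components gives, via subadditivity of $\mathcal{D}$, an order bound proportional to $|\Omega+K_\varepsilon|$. For the slab $\Omega_L=B_r(V)\cap[-L,L]^k$ this is \emph{not} asymptotically $|\Omega_L|$: the slab has fixed thickness comparable to $r$ in the direction transverse to $V$, so thickening by $K_\varepsilon$ (of radius $m(\varepsilon)$) multiplies the cardinality by roughly $(r+m(\varepsilon))/r$. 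Your claim that $|\Omega_L+K_\varepsilon|/|\Omega_L|\to1$ as $L\to\infty$ is false --- these slabs are not F\o lner in $\mathbb{Z}^k$. Consequently, after $L\to\infty$ your bound on $\mdim(X,T,V)$ is of order $m(\varepsilon)$, and since in general $m(\varepsilon)\to\infty$ as $\varepsilon\to0$, the outer limit gives $+\infty$ rather than a finite constant. Already for $k=1$ your inequality only yields $\Widim_\varepsilon(X,d)\le C\,|K_\varepsilon|$, which does not recover Ma\~n\'e's or Kato's theorem.

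The missing idea is the paper's Lemma \ref{prop:contexpansive}(1): there is a uniform $\delta>0$ such that any continuum $A$ with $c\le\max_{n\in[-N,N]^k}\diam T^n(A)\le2c$ satisfies $\max_{n\in\partial[-N,N]^k}\diam T^n(A)>\delta$. Combined with the arcwise structure of $C(X)$ (Lemma \ref{lem:continuumarc}), this shows that a continuum which is $\delta/2$-small on the \emph{boundary} $\partial[-N,N]^k$ is automatically $2c$-small on the whole box. One can therefore join $T^{-u}\mathcal{U}$ only over $u\in\partial[-N,N]^k$ and obtain closed covers $\mathcal{W}_N$ with $\mesh(\mathcal{W}_N,d^T_{[-N,N]^k})<2c$ but order only $O(N^{k-1})$ (Lemma \ref{prop:bounded}). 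In the directional estimate the paper then uses $\mathcal{W}_{N+m(\varepsilon)}$, so the $\varepsilon$-dependence enters only through the shift $N\mapsto N+m(\varepsilon)$ inside $(N+m)^{k-1}$, which washes out as $N\to\infty$ before $\varepsilon\to0$ and leaves a constant depending only on $(X,\mathbb{Z}^k,T)$. Without this boundary lemma, your multiplicity count cannot be made uniform in $\varepsilon$, and the proof does not close.
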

We remark that the finiteness in the statement of Theorem \ref{maintheorem2} cannot be improved to zero. In fact, it is possible to construct an expansive $\mathbb{Z}^2$-action (even minimal) of positive directional mean dimension (for details see \cite{Meyerovitch--Tsukamoto}). When $k=1$, Theorem \ref{maintheorem2} coincides with Ma\~n\'e's and Kato's results.

This paper is organized as follows: In Section 2, we collect basic notions, terminologies, and some known propositions in mean dimension and continuum theory. In Section 3, we study directional mean dimension of $\mathbb{Z}^k$-actions, where we shall present its fundamental properties and where we shall provide a constructive proof of Theorem \ref{thm:discontinuity}. In Section 4, we will prove Theorem \ref{maintheorem2}.

\section{Preliminaries}\subsection{Continua}By a \textbf{continuum} we understand a connected compact metric space. We refer to \cite{Nadler2,Nadler1} for a systematic study of continuum theory. Let $(X,d)$ be a compact metric space. By the {\em hyperspace} of $X$, denoted by $2^X$, we mean the space of all nonempty closed subsets of $X$, equipped with the Hausdorff metric $d_H(A,B)=\inf\{\epsilon>0:B\subset U_\epsilon(A),A\subset U_\epsilon(B)\}$, where $U_\epsilon(A)$ denotes the $\epsilon$-neighbourhood (with respect to the metric $d$ on $X$) of $A$ in $X$. We denote by $C(X)$ the space of all nonempty connected closed subsets of $X$, endowed with the Hausdorff metric $d_H$. It is well known \cite[Theorems 4.13, 4.17]{Nadler1} that if $X$ is a compact metric space, then both $2^X$ and $C(X)$ are compact. For a topological space $X$ the {\em connected component} of a point $x\in X$ is the union of all the connected subsets of $X$ containing $x$. Every connected component is connected and closed in $X$. The connected components of the points of $X$ form a partition of $X$. When concerning a space the term {\em nondegenerate} will always mean that the space consists of at least two points.

\subsection{Continuum-wise expansive $\mathbb{Z}^k$-actions}\label{subsec:cwexpansive}Let $k$ be a positive integer. We say that a triple $(X,\mathbb{Z}^k,T)$ is a {\em $\mathbb{Z}^k$-action} if $X$ is a compact metric space and $T\colon \mathbb{Z}^k\times X\to X$, $(n,x)\mapsto T^nx$ is a continuous mapping satisfying $T^0x=x$ and $T^{m+n}(x)=T^m(T^n(x))$ for all $m,n\in\mathbb{Z}^k$ and all $x\in X$. A $\mathbb{Z}^k$-action $(X,\mathbb{Z}^k,T)$ is said to be {\em continuum-wise expansive} if there is a constant $c>0$ satisfying that for every nondegenerate $A\in C(X)$ there exists $n\in\mathbb{Z}^k$ such that $\diam T^n(A)>c$. Such a constant $c>0$ is called an {\em expansivity constant} for $(X,\mathbb{Z}^k,T)$. Note that expansive $\mathbb{Z}^k$-actions are obviously continuum-wise expansive.

\subsection{Topological dimension} Let $X$ be a compact metric space. The {\em join} of open covers $\mathcal{U}$ and $\mathcal{V}$ of $X$ is the open cover  $\mathcal{U}\vee\mathcal{V}\coloneqq \{U\cap V:U\in\mathcal{U},V\in\mathcal{V}\}$. For a cover $\mathcal{U}$ of $X$ we set $\mesh(\mathcal{U},d)=\sup_{U\in\mathcal{U}}\diam(U)$. A cover $\mathcal{V}$ {\em refines} $\mathcal{U}$ if for every $V\in\mathcal{V}$ there exists $U\in\mathcal{U}$ with $V\subset U$. Let $\mathcal{U}=\{U_i\}_{i=1}^n$ be a finite cover of $X$. We define the {\em order} of $\mathcal{U}$ by $\ord(\mathcal{U})=\max_{x\in X}\sum_{i=1}^n1_{U_i}(x)-1$ and the {\em degree} of $\mathcal{U}$ by $\mathcal{D}(\mathcal{U})=\min_\mathcal{V}\ord(\mathcal{V})$, where $\mathcal{V}$ runs over all finite open covers of $X$ refining $\mathcal{U}$. The {\em topological dimension} of $X$ is defined by $\dim(X)=\sup_\mathcal{U}\mathcal{D}(\mathcal{U})$, where $\mathcal{U}$ runs over all finite open covers of $X$. As follows we list some elementary propositions about $\mathcal{D}$, which will be used in this paper.

\begin{lemma}[{\cite[Propositions 1.6.5, 4.4.5]{Coornaert}}]\label{lem:orderclosedcover}
Let $X$ be a compact metric space and $\alpha$ a finite open cover of $X$. Then the following statements hold:\begin{enumerate}\item$\mathcal{D}(\alpha)=\min_\beta\ord(\beta)$, where $\beta$ runs over all finite closed covers of $X$ refining $\alpha$;
\item$\mathcal{D}(\alpha)\le l$ if and only if there is a compact metric space $P$ of topological dimension $l$ and a continuous mapping $f\colon X\to P$ which is $\alpha$-compatible \textup(i.e. there exists a finite open cover $\beta$ of $P$ such that $f^{-1}(\beta)$ refines $\alpha$\textup).
\end{enumerate}
\end{lemma}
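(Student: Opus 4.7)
The plan is to prove the two parts separately, each via a standard approximation/shrinking argument from classical covering-dimension theory.

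For part (1), I would show the two inequalities. For the inequality $\min_\beta \ord(\beta) \le \mathcal{D}(\alpha)$, I take a finite open cover $\mathcal{V} = \{V_1,\dots,V_n\}$ refining $\alpha$ with $\ord(\mathcal{V}) = \mathcal{D}(\alpha)$, and apply the shrinking lemma (available since compact metric spaces are normal) to obtain closed sets $F_i \subseteq V_i$ still covering $X$. The cover $\{F_i\}$ refines $\alpha$ via the chain $F_i \subseteq V_i \subseteq U$ for some $U \in \alpha$, and since intersections can only shrink, $\ord(\{F_i\}) \le \ord(\mathcal{V})$. For the reverse, I take a finite closed refinement $\beta = \{F_1,\dots,F_n\}$ of $\alpha$ with order $m$ and thicken. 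The key observation is that for every $(m+2)$-element subset $S$ of the index set, $\bigcap_{i\in S} F_i = \emptyset$, so by compactness there is $\epsilon_S>0$ with $\bigcap_{i\in S} U_{\epsilon_S}(F_i) = \emptyset$; since $F_i$ is contained in some $U_i \in \alpha$ with $U_i$ open, I can also require $U_{\epsilon_S}(F_i) \subseteq U_i$. Taking $\epsilon$ to be the minimum of these finitely many $\epsilon_S$'s produces an open cover $\mathcal{V} = \{U_\epsilon(F_i)\}$ refining $\alpha$ with $\ord(\mathcal{V}) \le m$.

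For part (2), the ``if'' direction is straightforward: given $P$ of dimension $l$, a continuous $f \colon X \to P$, and a finite open cover $\beta$ of $P$ with $f^{-1}(\beta)$ refining $\alpha$, I pull back a finite open refinement $\gamma$ of $\beta$ with $\ord(\gamma) \le l$ (which exists by $\dim P = l$) and note that $f^{-1}(\gamma)$ is a finite open cover of $X$ refining $\alpha$, whose order does not exceed $\ord(\gamma) \le l$ (preimages cannot increase multiplicities).

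The ``only if'' direction is the main step, and I would prove it via the nerve construction. Start from a finite open cover $\mathcal{V} = \{V_1,\dots,V_n\}$ refining $\alpha$ with $\ord(\mathcal{V}) \le l$. Choose a partition of unity $\{\phi_i\}_{i=1}^n$ subordinate to $\mathcal{V}$ (so $\mathrm{supp}(\phi_i) \subseteq V_i$) and define $f\colon X \to \mathbb{R}^n$ by $f(x) = (\phi_1(x),\dots,\phi_n(x))$. Set $P = f(X)$, which is a compact metric space. The image lands in the geometric nerve of $\mathcal{V}$, a simplicial complex whose simplices correspond to index subsets $I$ with $\bigcap_{i\in I} V_i \ne \emptyset$; because $\ord(\mathcal{V}) \le l$, every such simplex has at most $l+1$ vertices, so the nerve has topological dimension at most $l$, and therefore $\dim P \le l$. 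To verify $\alpha$-compatibility I take the cover $\beta = \{W_i\}_{i=1}^n$ of $P$ where $W_i = \{y \in P : y_i > 0\}$; these are open in $P$ and cover it. Their preimages satisfy $f^{-1}(W_i) = \{x : \phi_i(x) > 0\} \subseteq V_i$, so $f^{-1}(\beta)$ refines $\mathcal{V}$ and hence $\alpha$.

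The main obstacle I anticipate is the ``only if'' direction of part (2): constructing $P$ with $\dim P \le l$ while simultaneously ensuring $\alpha$-compatibility. The nerve-plus-partition-of-unity trick handles both at once, but one must be careful that $P = f(X)$ really has dimension at most $l$ (and not merely that the ambient nerve does). This follows because $P$ is a closed subset of a polyhedron of dimension at most $l$, and topological dimension is monotone under taking closed subspaces of compact metric spaces. A minor technical nuisance in part (1) is ensuring that the thickenings $U_\epsilon(F_i)$ stay inside elements of $\alpha$; this is handled by choosing $\epsilon$ small enough using compactness of each $F_i$ and openness of the element of $\alpha$ containing it.
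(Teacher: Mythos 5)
Your proof is correct and follows the standard route: the paper itself does not prove this lemma but cites Coornaert's book, where part (1) is established by exactly the shrinking/thickening argument you give and part (2) by the nerve-plus-partition-of-unity construction you describe. The only cosmetic gap is that your $P=f(X)$ satisfies $\dim P\le l$ rather than $\dim P=l$ as literally stated; this is repaired by replacing $P$ with the disjoint union $P\sqcup[0,1]^{l}$ and adjoining the extra piece as one more element of the cover $\beta$.
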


\begin{lemma}[{\cite[Corollary 2.5]{Lindenstrauss--Weiss}}]\label{lem:subadditivedegree}Suppose that $\mathcal{U}$ and $\mathcal{V}$ are finite open covers of a compact metric space $X$. Then we have $\mathcal{D}(\mathcal{U}\vee\mathcal{V})\le\mathcal{D}(\mathcal{U})+\mathcal{D}(\mathcal{V})$.\end{lemma}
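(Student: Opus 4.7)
The plan is to reduce the subadditivity claim to the classical product inequality for topological dimension of compact metric spaces, $\dim(Y_1\times Y_2)\le\dim Y_1+\dim Y_2$, using the map-based characterization of $\mathcal{D}$ provided by Lemma \ref{lem:orderclosedcover}(2). A direct combinatorial attempt fails: if $\mathcal{U}'$ and $\mathcal{V}'$ are finite open refinements of $\mathcal{U}$ and $\mathcal{V}$ attaining orders $\mathcal{D}(\mathcal{U})=a$ and $\mathcal{D}(\mathcal{V})=b$, respectively, then $\mathcal{U}'\vee\mathcal{V}'$ refines $\mathcal{U}\vee\mathcal{V}$, but a point of $X$ may lie in $(a+1)(b+1)$ members of $\mathcal{U}'\vee\mathcal{V}'$, giving only the bound $\ord(\mathcal{U}'\vee\mathcal{V}')\le ab+a+b$, which is too weak. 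So one really needs the product theorem.

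First I would apply Lemma \ref{lem:orderclosedcover}(2) twice: take $l_1=\mathcal{D}(\mathcal{U})$ to obtain a compact metric space $P_1$ with $\dim(P_1)\le\mathcal{D}(\mathcal{U})$, a continuous map $f_1\colon X\to P_1$, and a finite open cover $\beta_1$ of $P_1$ such that $f_1^{-1}(\beta_1)$ refines $\mathcal{U}$; similarly, take $l_2=\mathcal{D}(\mathcal{V})$ to obtain $P_2$ with $\dim(P_2)\le\mathcal{D}(\mathcal{V})$, $f_2\colon X\to P_2$, and a finite open cover $\beta_2$ of $P_2$ with $f_2^{-1}(\beta_2)$ refining $\mathcal{V}$.

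Next I would form the product map $f=(f_1,f_2)\colon X\to P_1\times P_2$ and the open cover $\beta=\{B_1\times B_2:B_1\in\beta_1,\,B_2\in\beta_2\}$ of $P_1\times P_2$. Since $f^{-1}(B_1\times B_2)=f_1^{-1}(B_1)\cap f_2^{-1}(B_2)$, and since each $f_1^{-1}(B_1)$ sits inside some $U\in\mathcal{U}$ and each $f_2^{-1}(B_2)$ sits inside some $V\in\mathcal{V}$, every member of $f^{-1}(\beta)$ lies in some $U\cap V\in\mathcal{U}\vee\mathcal{V}$. Hence $f$ is $(\mathcal{U}\vee\mathcal{V})$-compatible.

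Finally, invoking the product inequality $\dim(P_1\times P_2)\le\dim(P_1)+\dim(P_2)$ for compact metric spaces and applying the ``only if'' part—really, the quantitative direction used above—of Lemma \ref{lem:orderclosedcover}(2) to the pair $(f,P_1\times P_2)$, I conclude
\[
\mathcal{D}(\mathcal{U}\vee\mathcal{V})\le\dim(P_1\times P_2)\le\dim(P_1)+\dim(P_2)\le\mathcal{D}(\mathcal{U})+\mathcal{D}(\mathcal{V}).
\]
The main obstacle is invoking the product inequality for topological dimension, which is classical but nontrivial (Hurewicz--Wallman, or Engelking). Everything else is formal: the ``factor through a low-dimensional polyhedron/metric space'' characterization in Lemma \ref{lem:orderclosedcover}(2) linearizes the combinatorics of refinement orders into the additive behavior of $\dim$ under products, and so the problem of bounding $\ord$ of a join (which is multiplicative in a naive estimate) is bypassed entirely.
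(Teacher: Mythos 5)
Your proof is correct and is essentially the argument behind the cited result: the paper gives no proof of this lemma, deferring to Lindenstrauss--Weiss (Corollary 2.5), and the proof there is exactly your reduction via $\alpha$-compatible maps and the product map $(f_1,f_2)\colon X\to P_1\times P_2$, together with the observation that the naive order bound for a join is only multiplicative. The one cosmetic difference is that the source realizes the compatible maps into finite polyhedra (nerves of refining covers), for which $\dim(P_1\times P_2)\le\dim(P_1)+\dim(P_2)$ is an elementary cell count, whereas your formulation with general compact metric spaces invokes the full Hurewicz--Wallman product theorem; both routes are valid.
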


Let $(X,d)$ be a compact metric space, $Y$ a topological space, and $\epsilon>0$. A continuous mapping $f\colon X\to Y$ is called an {\em $\epsilon$-embedding} if $f(x)=f(x^\prime)$ implies $d(x,x^\prime)<\epsilon$ for all $x,x^\prime\in X$. We define $\Widim_\epsilon(X,d)=\min_K\dim(K)$, where $K$ ranges over all compact metric spaces admitting an $\epsilon$-embedding $f\colon X\to K$. Here we put a compatible metric $d$ on $X$ explicitly in the notation $\Widim_\epsilon(X,d)$ because $\Widim_\epsilon(X,d)$ does depend on $d$. Nevertheless, it is well known \cite[Chapter 4]{Coornaert}, for any compatible metric $d$ on $X$, that $\dim(X)=\lim_{\epsilon\to0}\Widim_\epsilon(X,d)$.\subsection{Mean dimension}\label{subsec:mdim}Let $k$ and $N$ be positive integers. We denote$$[-N,N]^k=\{(n_i)_{i=1}^k\in\mathbb{Z}^k:-N\le n_i\le N,1\le i\le k\},$$ $$\partial[-N,N]^k=\bigcup_{j=1}^k\{(x_i)_{i=1}^k\in[-N,N]^k:x_j\in\{-N,N\}\}.$$

We caution the readers that the notation $[-N,N]^k$ above depends on the context. In some places in the text, $[-N,N]$ may mean a real interval, but we believe that this will not cause any confusion at all. Let $(X,\mathbb{Z}^k,T)$ be a $\mathbb{Z}^k$-action. We set for a finite open cover $\mathcal{U}$ of $X$
\begin{equation}\label{def:mdim-degree}\mdim(X,T;\mathcal{U})=\lim_{N\to+\infty}\frac{\mathcal{D}(\bigvee_{n\in[-N,N]^k}T^n\mathcal{U})}{(2N+1)^k}
\end{equation}
and define the {\em mean dimension} of $(X,\mathbb{Z}^k,T)$ by $$\mdim(X,\mathbb{Z}^k,T)=\sup_\mathcal{U}\mdim(X,T;\mathcal{U}),$$
where $\mathcal{U}$ ranges over all finite open covers of $X$. The limit in \eqref{def:mdim-degree} always exists due to Lemma \ref{lem:subadditivedegree}. Clearly, $\mdim(X,\mathbb{Z}^k,T)\in[0,+\infty]$. An easy observation is that if $\dim(X)<+\infty$ then $\mdim(X,\mathbb{Z}^k,T)=0$. Moreover, it was shown \cite[Theorem 4.2]{Lindenstrauss--Weiss} that if $(X,\mathbb{Z}^k,T)$ has finite topological entropy, then $\mdim(X,\mathbb{Z}^k,T)=0$. Typical examples of dynamical systems having positive mean dimension are \textit{full shifts} (over a positive dimension alphabet). Let $D$ be a positive integer. The full shift on $([0,1]^D)^{\mathbb{Z}^k}$ is defined by $\sigma\colon \mathbb{Z}^k\times([0,1]^D)^{\mathbb{Z}^k}\to([0,1]^D)^{\mathbb{Z}^k}$, $(n,(x_m)_{m\in{\mathbb{Z}^k}})\mapsto(x_{m+n})_{m\in{\mathbb{Z}^k}}$. It was proved in \cite{Lindenstrauss--Weiss} that $\mdim(([0,1]^D)^{\mathbb{Z}^k},\sigma)=D$. Let $d$ be a compatible metric on $X$. For any finite subset $\Omega$ of $\mathbb{Z}^k$ we define $d_\Omega^T$ by$$d_\Omega^T(x,x^\prime)=\max_{n\in\Omega}d(T^nx,T^nx^\prime),\quad\text{for } x,x^\prime\in X.$$

It is clear that $d_\Omega^T$ is also a compatible metric on $X$. Furthermore, we can verify that $$\mdim(X,\mathbb{Z}^k,T)=\lim_{\epsilon\to0}\left(\lim_{N\to+\infty}\frac{\Widim_\epsilon(X,d_{[-N,N]^k}^T)}{(2N+1)^k}\right).$$

\section{Directional mean dimension}
Let $k\in\mathbb{N}$ and fix a real number $r>\sqrt{k}/2$.
 Let $V\subseteq \mathbb{R}^k$ be an $h$-dimensional subspace of $\mathbb{R}^k$. Define $B_r(V)\coloneqq \{u\in\mathbb{Z}^k:|u-w|<r\text{ for some }w\in V\}$. For a $\mathbb{Z}^k$-action $(X,\mathbb{Z}^k,T)$, with a compatible metric $d$ on $X$, we define the {\em $h$-dimensional directional mean dimension} of $(X,\mathbb{Z}^k,T)$ with respect to $V$ by
\begin{equation}\label{equ:definitiondirectmdim}
\mdim(X,T,V)=\lim_{\epsilon\to0}\left(\liminf_{N\to+\infty}\frac{\Widim_\epsilon(X,d_{B_r(V)\cap[-N,N]^k}^T)}{\vol_{h}(V\cap[-N,N]^k)}\right),
\end{equation}
where $\vol_{h}(V\cap[-N,N]^k)$ denotes the $h$-dimensional volume inside $V$ of its intersection with the real cube $[-N,N]^k$ \footnote{It is possible to give a much more general definition, following the notions of directional entropy in \cite{Boyle--Lind}, but for the sake of simplicity of the paper, we do not do so here.}. It follows that there exist positive constants $C_1$ and $C_2$ that depend only on the dimension $k$ and are independent of $V$ such that 
$$
C_1 N^{h} \leq \vol_{h}(V\cap[-N,N]^k)
\leq C_2 N^{h}.
$$
We note that we shall write $|A|$ for the cardinality of a set $A$ in the sequel.

The constant $r>\sqrt{k}/2$ is to guarantee that for every point $u\in V$ there is $v\in\mathbb{Z}^k$ with $|u-v|<r$, and hence $B_r(V)$ is always an infinite set. We do not include $r$ in the notation $\mdim(X,T,V)$ because the definition of directional mean dimension is independent of the choices of $r>\sqrt{k}/2$ and the compatible metrics $d$ on $X$ (see Proposition \ref{prop:ssss}).

For each $1\le i\le k$ we set $\vec{e_i}=(0,\dots,0,1,0,\dots,0)$ (i.e. having $1$ only on the $i$-th coordinate) and denote by $\mathbb{Z}_i^k=\langle e_i \rangle^{\perp}\cap \mathbb{Z}^k $ the subgroup of $\mathbb{Z}^k$ consisting of all the elements having zero on the $i$-th coordinate. For a $\mathbb{Z}^k$-action $(X,\mathbb{Z}^k,T)$ and every $1\le i\le k$ we let $(X,\mathbb{Z}^{k-1},T|_{\mathbb{Z}_i^k})$ (which is a $\mathbb{Z}^{k-1}$-action) be the restriction of $T$ to the subgroup $\mathbb{Z}_i^k$.
\begin{proposition}\label{prop:ssss}
The following statements are true for all $\mathbb{Z}^k$-actions $(X,\mathbb{Z}^k,T)$:

(1) For any $1\leq h\leq k$ and any $h$-dimensional subspace $V\subseteq \mathbb{R}^k$, the value $\mdim(X,T,V)$ is independent of $r>\sqrt{k}/2$.

(2) Directional mean dimension does not depend on a choice of a compatible metric on $X$; more precisely, for any $h$-dimensional subspace $V\subseteq \mathbb{R}^k$ and $r>\sqrt{k}/2$
\begin{equation} \label{eq:mdim_def} \mdim(X,T,V)=\sup_{\mathcal{U}}\left(\liminf_{N\to+\infty}\frac{\mathcal{D}(\vee_{n\in B_r(V)\cap[-N,N]^k}T^n\mathcal{U})}{\vol_{h}(V\cap[-N,N]^k)}\right),\end{equation} where $\mathcal{U}$ ranges over all finite open covers of $X$.

(3) For any $1\le i\le k$ we have $\mdim(X,T,\langle \vec{e_i}\rangle^{\perp})=\mdim(X,\mathbb{Z}^{k-1},T|_{\mathbb{Z}_i^k})$.

\end{proposition}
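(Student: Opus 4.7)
The plan is to handle all three parts through a common toolkit: controlling $\Widim_\epsilon(X, d_F^T)$ under enlargement of the index set $F$ by a finite set (via equi-uniform continuity of $\{T^f\}_{f \in F}$), together with the classical interplay between $\Widim_\epsilon$ and the degree $\mathcal{D}$ of finite open covers (Lemma \ref{lem:orderclosedcover}).

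For (1), fix $\sqrt{k}/2 < r < r'$. Monotonicity of $\Widim_\epsilon$ in its index set yields the inequality with $r$ on the left. For the reverse, observe that every lattice point $u \in B_{r'}(V)$ lies within Euclidean distance less than $r' + \sqrt{k}/2$ of some point of $B_r(V)$, namely the nearest lattice point to the orthogonal projection of $u$ onto $V$. Hence there is a finite set $F_0 \subset \mathbb{Z}^k$ such that $B_{r'}(V) \cap [-N,N]^k \subseteq (B_r(V) \cap [-N',N']^k) + F_0$, where $N' = N + O(1)$. Equi-uniform continuity of $\{T^f\}_{f \in F_0}$ yields, for each $\epsilon > 0$, some $\delta(\epsilon) > 0$ with $\delta(\epsilon) \to 0$ as $\epsilon \to 0$ such that $d_{B_r(V) \cap [-N',N']^k}^T(x, x') < \delta$ implies $d_{B_{r'}(V) \cap [-N,N]^k}^T(x, x') < \epsilon$, and so
\[
\Widim_\epsilon(X, d_{B_{r'}(V) \cap [-N,N]^k}^T) \le \Widim_\delta(X, d_{B_r(V) \cap [-N',N']^k}^T).
\]
Dividing both sides by $\vol_h(V \cap [-N,N]^k)$, using $\vol_h(V \cap [-N',N']^k)/\vol_h(V \cap [-N,N]^k) \to 1$, and letting $\epsilon \to 0$ delivers the reverse inequality.

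For (2), the $\le$ direction of (\ref{eq:mdim_def}): given $\mathcal{U}$ with $\mesh(\mathcal{U}, d) < \epsilon$, every element of $\vee_{n \in F} T^n \mathcal{U}$ has $d_F^T$-diameter less than $\epsilon$; passing to a refinement of minimal order and using the nerve construction gives $\Widim_\epsilon(X, d_F^T) \le \mathcal{D}(\vee_{n \in F} T^n \mathcal{U})$. For $\ge$, fix $\mathcal{U}$ with Lebesgue number $\eta > 0$ in $d$, pick $\epsilon < \eta$, and let $f \colon X \to K$ realize $\dim K = \Widim_\epsilon(X, d_F^T)$ via an $\epsilon$-embedding. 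A compactness argument produces a finite open cover $\gamma$ of $K$ so fine that each $f^{-1}(B)$, $B \in \gamma$, has $d_F^T$-diameter less than $\eta$; then for every $n \in F$ the image $T^n(f^{-1}(B))$ has $d$-diameter less than $\eta$ and so is contained in some element of $\mathcal{U}$. Therefore each $f^{-1}(B)$ is contained in an element of $\vee_{n \in F} T^n \mathcal{U}$, so $f$ is $(\vee_{n \in F} T^n \mathcal{U})$-compatible, and Lemma \ref{lem:orderclosedcover}(2) gives $\mathcal{D}(\vee_{n \in F} T^n \mathcal{U}) \le \dim K = \Widim_\epsilon(X, d_F^T)$. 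Dividing by $\vol_h$, passing to $\liminf_N$, $\lim_{\epsilon \to 0}$, and finally $\sup_\mathcal{U}$, yields the reverse inequality. The metric independence then follows at once, since the cover formulation does not involve $d$.

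For (3), take $i = k$ without loss of generality, so $V = \langle \vec{e_k} \rangle^{\perp}$; then $V \cap [-N,N]^k$ identifies with $[-N,N]^{k-1}$ and $\vol_{k-1}(V \cap [-N,N]^k) = (2N)^{k-1}$. For $N \ge r$ one has $B_r(V) \cap [-N,N]^k = ([-N,N]^{k-1} \times \{0\}) + F_1$, where $F_1 = \{0\}^{k-1} \times (\mathbb{Z} \cap (-r, r))$ is finite, so the argument of (1) applied to this decomposition, together with the identity $d_{[-N,N]^{k-1} \times \{0\}}^T = d_{[-N,N]^{k-1}}^{T|_{\mathbb{Z}_k^k}}$, gives
\[
\Widim_\epsilon(X, d_{[-N,N]^{k-1}}^{T|_{\mathbb{Z}_k^k}}) \le \Widim_\epsilon(X, d_{B_r(V) \cap [-N,N]^k}^T) \le \Widim_\delta(X, d_{[-N,N]^{k-1}}^{T|_{\mathbb{Z}_k^k}}),
\]
with $\delta \to 0$ as $\epsilon \to 0$. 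Because $(2N)^{k-1}/(2N+1)^{k-1} \to 1$ and the limit in $N$ in the standard $\mathbb{Z}^{k-1}$-mean-dimension formula exists by subadditivity, the $\liminf$ in the directional definition agrees with the $\lim$ in the standard one, establishing the equality. The main obstacle is the $\ge$ direction of (2), where care is needed to upgrade the pointwise $\epsilon$-embedding condition to a uniform small-$d_F^T$-diameter bound on preimages of members of $\gamma$ in order to invoke Lemma \ref{lem:orderclosedcover}(2) and conclude $(\vee_{n \in F} T^n \mathcal{U})$-compatibility.
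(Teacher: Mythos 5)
Your proof is correct and takes essentially the same route as the paper's: monotonicity plus a finite-translate covering of $B_{r'}(V)$ by translates of $B_{r}(V)$ together with equi-uniform continuity for (1), the standard Lebesgue-number/mesh correspondence between $\Widim_\epsilon$ and $\mathcal{D}$ via Lemma~\ref{lem:orderclosedcover} for (2), and a direct identification for (3) (which the paper dispatches with ``follows directly from definition''). The minor variations --- enlarging to $[-N',N']^k$ with $N'=N+O(1)$ in (1), and using a fine cover of $K$ rather than point preimages to establish $(\vee_{n\in F}T^n\mathcal{U})$-compatibility in (2) --- do not change the argument.
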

\begin{proof}
(1)  In the first part of the proof we shall use the notation $\mdim(X,T,V;r)$ for the right hand side of \eqref{eq:mdim_def} (instead of $\mdim(X,T,V)$), since our current goal is to show that this quantity does not depend on $r>\sqrt{k}/2$. Take $\sqrt{k}/2<r_1<r_2$. On the one hand, since $$d_{B_{r_1}(V) \cap[-N,N]^k}^T(x,y) \le  d_{B_{r_2}(V)\cap[-N,N]^k}^T(x,y)$$ for all $x,y\in X$ and $N\in\mathbb{N}$, we have  \[ \Widim_\epsilon(X,d_{B_{r_1}(V)\cap[-N,N]^k}^T)\le\Widim_\epsilon(X,d_{B_{r_2}(V)\cap[-N,N]^k}^T)\] for all $N\in\mathbb{N}$ and $\epsilon>0$. Thus $\mdim(X,T,V;r_1)\le\mdim(X,T,V;r_2)$. On the other hand, there exists a finite subset $F$ of $\mathbb{Z}^k$ such that for any $N\in\mathbb{N}$ the set $B_{r_2}(V)\cap[-N,N]^k$ is contained in $\bigcup_{m\in F}\left((B_{r_1}(V)\cap[-N,N]^k)+m\right)$. For a given $\epsilon>0$ there is $\delta>0$ such that $d(x,y)<\delta$ implies $d_F^T(x,y)<\epsilon$. It follows that for any $N\in\mathbb{N}$ we have $\Widim_\epsilon(X,d_{B_{r_2}(V)\cap[-N,N]^k}^T)\le\Widim_\delta(X,d_{B_{r_1}(V)\cap[-N,N]^k}^T)$. This implies that $\mdim(X,T,V;r_2)\le\mdim(X,T,V;r_1)$.

(2) To show$$\sup_\mathcal{U}\left(\liminf_{N\to+\infty}\frac{\mathcal{D}(\vee_{n\in B_r(V)\cap[-N,N]^k}T^n\mathcal{U})}{\vol_{h}(V\cap[-N,N]^k)}\right)\le\mdim(X,T,\vec{v}),$$we take a finite open cover $\mathcal{U}$ of $X$ and let $\lambda>0$ be a Lebesgue number of $\mathcal{U}$ with respect to a compatible metric $d$ on $X$. It suffices to prove for every $N\in\mathbb{N}$ that $\mathcal{D}(\vee_{n\in B_r(V)\cap[-N,N]^k}T^n\mathcal{U})\le\Widim_\lambda(X,d_{B_r(V)\cap[-N,N]^k}^T)$. In fact, we take a compact metric space $P$ with $\dim(P)=\Widim_\lambda(X,d_{B_r(V)\cap[-N,N]^k}^T)$ and a $\lambda$-embedding $f\colon X\to P$ with respect to $d_{B_r(V)\cap[-N,N]^k}^T$. Then it follows that $\diam T^n(f^{-1}(p))<\lambda$ and hence $T^n(f^{-1}(p))$ is contained in some element in $\mathcal{U}$ for every $n\in B_r(V)\cap[-N,N]^k$ and every $p\in P$. Thus, $f^{-1}(p)$ is contained in some element in $\bigvee_{n\in B_r(V)\cap[-N,N]^k}T^n\mathcal{U}$ for every $p\in P$. For every $U\in\bigvee_{n\in B_r(V)\cap[-N,N]^k}T^n\mathcal{U}$ we set $\tilde{U}=\{p\in P:f^{-1}(p)\subset U\}$. Clearly, $f^{-1}(\tilde{U})\subset U$, the sets $\tilde{U}$'s cover $P$, and each $\tilde{U}$ is open in $P$. So $f$ is $\bigvee_{n\in B_r(V)\cap[-N,N]^k}T^n\mathcal{U}$-compatible, and therefore by Lemma \ref{lem:orderclosedcover} we see that $\mathcal{D}(\vee_{n\in B_r(V)\cap[-N,N]^k}T^n\mathcal{U})\le\dim(P)$. 

To show
$$\mdim(X,T,V)\le\sup_\mathcal{U}\left(\liminf_{N\to+\infty}\frac{\mathcal{D}(\vee_{n\in B_r(V)\cap[-N,N]^k}T^n\mathcal{U})}{\vol_{h}(V\cap[-N,N]^k)}\right),$$we let $\epsilon>0$, $N\in\mathbb{N}$, and take a finite open cover $\mathcal{U}$ of $X$ with $\mesh(\mathcal{U},d)<\epsilon$. By Lemma \ref{lem:orderclosedcover}, we can find a compact metric space $P$ with $$\dim(P)=\mathcal{D}(\vee_{n\in B_r(V)\cap[-N,N]^k}T^n\mathcal{U})$$ and a continuous $f\colon X\to P$ which is $\bigvee_{n\in B_r(V)\cap[-N,N]^k}T^n\mathcal{U}$-compatible. So we have $\Widim_\epsilon(X,d_{B_r(V)\cap[-N,N]^k}^T)\le\dim(P)$, as required.

Statement (3) follows directly from definition.
\end{proof}
In what follows we are mainly interested in $(k-1)$-dimensional subspaces of $\mathbb{Z}^k$. Note that via the orthogonal complement, these subspaces can be parametrized with a single vector of $\mathbb{R}^k$. The next result shows that $\mathbb{Z}^k$-actions of positive mean dimension must have infinite directional mean dimension with respect to each $\langle \vec{e_i}\rangle^{\perp}$ ($1\le i\le k$).

\begin{proposition}
If a $\mathbb{Z}^k$-action $(X,\mathbb{Z}^k,T)$ satisfies $\min_{1\le i\le k}\mdim(X,T,\langle\vec{e_i}\rangle^{\perp})<+\infty$ then $\mdim(X,\mathbb{Z}^k,T)=0$.
\end{proposition}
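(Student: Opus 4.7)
The plan is, using Proposition \ref{prop:ssss}(3), to pick an index $i$ realizing the minimum in the hypothesis and relabel coordinates so that $i=k$. Then the hypothesis becomes $\mdim(X, \mathbb{Z}^{k-1}, T|_{\mathbb{Z}_k^k}) = M$ for some $M < +\infty$, and the goal is to show $\mdim(X, T; \mathcal{U}) = 0$ for every finite open cover $\mathcal{U}$ of $X$. The idea is a slab decomposition of $[-N,N]^k$ along the $\vec{e_k}$-direction that trades a refinement of $\mathcal{U}$ for a $1/L$ saving in the normalization.

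Fix $\mathcal{U}$ and an integer $L \ge 1$, and define the refined cover $\mathcal{U}_L = \bigvee_{l=0}^{L-1} T^{l\vec{e_k}} \mathcal{U}$. Partition $[-N,N]$ into $J \le (2N+1)/L + 1$ consecutive integer intervals $I_1, \dots, I_J$ each of length at most $L$. For each $j$, let $a_j$ be the left endpoint of $I_j$: since $I_j - a_j \subseteq [0, L-1]$, the cover $\bigvee_{n \in [-N,N]^{k-1} \times I_j} T^n \mathcal{U}$ is coarsened by the translate $\bigvee_{m \in [-N,N]^{k-1}} T^{(m, a_j)} \mathcal{U}_L$, so its degree is bounded above by $\mathcal{D}\bigl(\bigvee_{m \in [-N,N]^{k-1}} T^{(m, 0)} \mathcal{U}_L\bigr)$ by translation invariance of $\mathcal{D}$. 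Applying Lemma \ref{lem:subadditivedegree} to the decomposition $\bigvee_{n \in [-N,N]^k} T^n \mathcal{U} = \bigvee_{j=1}^J \bigvee_{n \in [-N,N]^{k-1} \times I_j} T^n \mathcal{U}$ yields
$$ \mathcal{D}\Bigl( \bigvee_{n\in[-N,N]^k}T^n\mathcal{U}\Bigr) \;\le\; J \cdot \mathcal{D}\Bigl(\bigvee_{m\in [-N,N]^{k-1}} T^{(m,0)} \mathcal{U}_L\Bigr). $$

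Next I would apply the hypothesis to the particular cover $\mathcal{U}_L$: since $\mdim(X, \mathbb{Z}^{k-1}, T|_{\mathbb{Z}_k^k}; \mathcal{U}_L) \le M$, for any $\epsilon > 0$ and all $N$ sufficiently large (depending on $L$ and $\epsilon$) we obtain $\mathcal{D}\bigl(\bigvee_{m\in[-N,N]^{k-1}}T^{(m,0)}\mathcal{U}_L\bigr) \le (M+\epsilon)(2N+1)^{k-1}$. Substituting into the above and dividing by $(2N+1)^k$ gives, in the limit $N\to+\infty$, the bound $\mdim(X,T;\mathcal{U}) \le (M+\epsilon)/L$. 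Sending $L \to +\infty$ and $\epsilon \to 0$ yields $\mdim(X,T;\mathcal{U}) = 0$, and hence $\mdim(X, \mathbb{Z}^k, T) = 0$ by arbitrariness of $\mathcal{U}$.

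The main obstacle, and the heart of the argument, is isolating the $1/L$ factor via the slab decomposition: the parameter $L$ enters once as the refinement of $\mathcal{U}$ along $\vec{e_k}$ (where the hypothesis still supplies the bound $M$, because $M$ is a \emph{supremum} over all covers and thus applies uniformly to $\mathcal{U}_L$), and once as the inverse factor $1/L$ arising from the slab count, producing the essential saving that drives the upper bound to zero. The order of limits, $N \to +\infty$ before $L \to +\infty$, is crucial, but causes no trouble because for each fixed $L$ the hypothesis supplies an asymptotic bound in $N$.
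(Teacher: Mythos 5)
Your proof is correct and follows essentially the same strategy as the paper's: decompose the cube $[-N,N]^k$ into slabs of bounded thickness in the $\vec{e_k}$-direction, absorb each slab into a single application of the $(k-1)$-dimensional action to the refined cover $\bigvee_{l}T^{l\vec{e_k}}\mathcal{U}$ (to which the finite directional mean dimension applies uniformly, being a supremum over covers), and harvest the factor $1/L$ from the slab count. The only difference is one of packaging: the paper reaches the same bound by rewriting $\mdim(X,T;\mathcal{U})$ as an infimum over the boxes $[-jN,jN]^{k-1}\times[-jM,jM]$ viewed as F\o lner sequences, whereas you slice $[-N,N]^k$ directly and invoke subadditivity of $\mathcal{D}$ together with translation invariance, which is, if anything, slightly more self-contained.
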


\begin{proof}
Without loss of generality we assume $\mdim(X,T,\langle\vec{e_k}\rangle^{\perp})<+\infty$. For any positive integers $N,M$ and $j$ we put $F(N,M;j)=[-jN,jN]^{k-1}\times[-jM,jM]$. We set $\mathcal{F}=\{F(N,M;1):N,M\ge1\}$. Clearly, $\{F(N,M;j)\}_{j=1}^\infty$ is a F\o lner sequence of $\mathbb{Z}^k$, for any positive integers $N$ and $M$; and $\mathcal{F}=\bigcup_{N,M\ge1}\{F(N,M;j)\}_{j=1}^\infty$. We rewrite the definition of $\mdim(X,T;\mathcal{U})$ using $\{F(N,M;j)\}_{j=1}^\infty$ in \eqref{def:mdim-degree}. By subadditivity of $\mathcal{D}$ we have for any finite open cover $\mathcal{U}$ of $X$ and any $N,M\ge1$ $$\mdim(X,T;\mathcal{U})=\lim_{j\to+\infty}\frac{\mathcal{D}(\bigvee_{n\in F(N,M;j)}T^n\mathcal{U})}{|F(N,M;j)|}=\inf_{j\ge1}\frac{\mathcal{D}(\bigvee_{n\in F(N,M;j)}T^n\mathcal{U})}{|F(N,M;j)|}.$$
Thus,
\begin{align*}
\mdim(X,T;\mathcal{U})
&=\inf_{N,M,j\ge1}\frac{\mathcal{D}(\bigvee_{n\in F(N,M;j)}T^n\mathcal{U})}{|F(N,M;j)|}=\inf_{F\in\mathcal{F}}\frac{\mathcal{D}(\bigvee_{n\in F}T^n\mathcal{U})}{|F|}\\
&=\inf_{N,M\ge1}\frac{1}{2M+1}\cdot\frac{\mathcal{D}(\bigvee_{n\in[-N,N]^{k-1}}(T|_{\mathbb{Z}^k_k})^n(\bigvee_{m\in[-M,M]}T^{m\vec{e_k}}\mathcal{U}))}{(2N+1)^{k-1}}\\
&=\inf_{M\ge1}\frac{1}{2M+1}\cdot\inf_{N\ge1}\frac{\mathcal{D}(\bigvee_{n\in[-N,N]^{k-1}}(T|_{\mathbb{Z}^k_k})^n(\bigvee_{m\in[-M,M]}T^{m\vec{e_k}}\mathcal{U}))}{(2N+1)^{k-1}}.
\end{align*}
Since $\bigvee_{m\in[-M,M]}T^{m\vec{e_k}}\mathcal{U}$ is independent of $N$, we have $$\mdim(X,T;\mathcal{U})\le\inf_{M\ge1}\frac{\mdim(X,\mathbb{Z}^{k-1},T|_{\mathbb{Z}^k_k})}{2M+1}=\inf_{M\ge1}\frac{\mdim(X,T,\langle\vec{e_k}\rangle^{\perp})}{2M+1}.$$ Since $\mdim(X,T,\langle\vec{e_k}\rangle^{\perp})$ is finite and since $\mathcal{U}$ is an arbitrary finite open cover of $X$, we conclude that $\mdim(X,\mathbb{Z}^k,T)=0$.
\end{proof}
We note that for a $\mathbb{Z}^k$-action it is possible for us to consider its $(k-1)$-dimensional directional mean dimension as the $[0,+\infty]$-valued function on the $(k-1)$-sphere $\mathbb{S}^{k-1}=\{\vec{v} \in \mathbb{R}^{k-1}: \|\vec{v}\|=1\}$ by associating to $\vec{v}\in\mathbb{S}^{k-1}$ the value of $\mdim(X,T,\langle\vec{v}\rangle^{\perp})$. The next theorem reveals this function is \textit{not} necessarily continuous on $\mathbb{S}^{k-1}$ even for $k=2$.

\begin{theorem}[=Theorem \ref{thm:discontinuity}]
For any $\rho\in[0,+\infty]$ there is a $\mathbb{Z}^2$-action $(X,\mathbb{Z}^2,T)$ such that if $\vec{v}\in\{(-1,0),(1,0)\}$ then $\mdim(X,T,\langle \vec{v}\rangle^{\perp} )=\rho$; if $\vec{v}\in\mathbb{S}\setminus\{(-1,0),(1,0)\}$ then $\mdim(X,T,\langle\vec{v}\rangle^{\perp})=0$. In particular, if $\rho>0$, the $(k-1)$-dimensional directional mean dimension of $(X,\mathbb{Z}^2,T)$ is not continuous on $\mathbb{S}$.

\end{theorem}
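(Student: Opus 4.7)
The plan is to build, for each $\rho\in[0,+\infty]$, a $\mathbb{Z}^2$-action by spreading a $\mathbb{Z}$-action of mean dimension $\rho$ across an index set $\mathbb{Z}$ that is then one-point compactified. Choose $(Y,\mathbb{Z},S)$ with $\mdim(Y,\mathbb{Z},S)=\rho$ (a point for $\rho=0$, the shift on $([0,1]^D)^{\mathbb{Z}}$ for integer $\rho=D$, and standard extensions otherwise). Let $X=(\mathbb{Z}\times Y)\cup\{\infty\}$ carry the compatible metric $d$ given by $d((n,y_1),(n,y_2))=2^{-|n|}\delta(y_1,y_2)$ on each fiber (for $\delta$ a fixed compatible metric of diameter $\le 1$ on $Y$), $d((n,y),\infty)=2^{-|n|}$, and $d((n_1,y_1),(n_2,y_2))=2^{-|n_1|}+2^{-|n_2|}$ when $n_1\ne n_2$; one verifies that this realizes the one-point compactification topology. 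Define commuting homeomorphisms $A(n,y)=(n-1,y)$ and $B(n,y)=(n,Sy)$, both fixing $\infty$, and set $T^{(a,b)}=A^aB^b$.

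For $\vec{v}\in\{(\pm 1,0)\}$, Proposition~\ref{prop:ssss}(3) reduces the computation to $\mdim(X,\mathbb{Z},B)$. The $B$-invariant subsystem $\{0\}\times Y\cong(Y,S)$ yields the lower bound $\rho$. For the upper bound, any finite open cover $\mathcal{U}$ of $X$ has one element containing $\infty$ whose complement is compact and hence supported on only finitely many fibers; since $B$ preserves fibers and acts there as $S$, the degree $\mathcal{D}(\bigvee_{|k|\le N}B^k\mathcal{U})$ is bounded by the maximum, over the relevant fibers, of $\mathcal{D}(\bigvee_{|k|\le N}S^k\mathcal{U}_n)$ (using that these fibers are pairwise metrically separated and that the degree of a cover on a finite disjoint union equals the maximum of the degrees on the parts). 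Dividing by $2N+1$ gives $\mdim(X,\mathbb{Z},B)\le\mdim(Y,\mathbb{Z},S)=\rho$.

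For $\vec{v}\in\mathbb{S}\setminus\{(\pm 1,0)\}$ with $V=\langle\vec{v}\rangle^{\perp}$, the aim is to show that $\Widim_{\epsilon}(X,d^T_{B_r(V)\cap[-N,N]^2})$ is bounded by a constant $W(\epsilon,\vec{v})$ independent of $N$; dividing by $\vol_1(V\cap[-N,N]^2)\sim N$ then yields $\mdim(X,T,V)=0$. On a fiber $\{n\}\times Y$, one computes
\[
d^T_{B_r(V)\cap[-N,N]^2}((n,y_1),(n,y_2))=\max_{(i,j)}2^{-|n-i|}\delta(S^jy_1,S^jy_2),
\]
and the exponential decay $2^{-|n-i|}$ forces only $(i,j)$ with $|n-i|=O(\log(1/\epsilon))$ to contribute above the threshold $\epsilon$. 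For each such $i$, the tube condition $|v_1i+v_2j|<r$ (with $v_2\ne 0$ since $\vec{v}\ne(\pm 1,0)$) pins $j$ to an interval of length $2r/|v_2|$ about $-v_1i/v_2$, so the effective shifts $S^j$ occupy a bounded window $J_n$ of size $O((\log(1/\epsilon)+r)/|v_2|)$. Since $S$ is a homeomorphism, $\Widim_{\epsilon}(Y,\delta^S_{J_n})$ depends only on $|J_n|$, yielding a uniform bound $W(\epsilon,\vec{v})$ across fibers. Although there can be $O(N)$ contributing fibers, a computation shows any two of them are $d^T$-separated by at least $1+2^{-|n_1-n_2|}>1$; so for $\epsilon<1$ the disjoint-union property of topological dimension gives $\Widim_{\epsilon}(X,d^T_{B_r(V)\cap[-N,N]^2})\le W(\epsilon,\vec{v})$ uniformly in $N$.

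The principal obstacle is the tilted case: the tube has length of order $N$, and one might naively expect the shifts $S^j$ appearing in the metric to form an unbounded set. The compactification point $\infty$ resolves this tension, because the $2^{-|n-i|}$ decay concentrates attention on $i\approx n$, after which the linear tube equation in $(i,j)$ confines $j$ to a bounded interval. For $\rho>0$ the resulting function $\vec{v}\mapsto\mdim(X,T,\langle\vec{v}\rangle^{\perp})$ jumps from $\rho$ at $(\pm 1,0)$ to $0$ elsewhere, exhibiting the claimed discontinuity on $\mathbb{S}$.
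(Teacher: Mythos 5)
Your construction is essentially the paper's: take a $\mathbb{Z}$-system $(Y,S)$ of mean dimension $\rho$, spread it over a $\mathbb{Z}$-indexed family of fibers, one-point compactify, and let one generator translate the index while the other acts fiberwise. (You have transposed the two coordinates relative to the paper's formula $(m,n)(x,i)=(\sigma^m x,n+i)$, and your bookkeeping is the one that actually matches the statement as written: the fiberwise generator sits along $\langle(1,0)\rangle^{\perp}$.) Where you genuinely diverge is in the verification. The paper works entirely with covers and the degree $\mathcal{D}$: for a tilted direction it shows by induction that joining $\mathcal{V}$ with $T^{(m,n)}\mathcal{V}$ for $|n|$ large does not increase the degree, so the join over the whole tube costs only a constant multiple of $\mathcal{D}(\mathcal{V})$. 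You instead fix an explicit compactification metric with fiber scales $2^{-|n|}$ and bound $\Widim_\epsilon$ directly: the exponential decay localizes the relevant translates $A^i$ to $|n-i|=O(\log(1/\epsilon))$, the tube equation then confines the fiberwise exponents $j$ to a window of bounded length, and a clopen decomposition of $X$ into fibers plus a small neighborhood of $\infty$ gives a bound on $\Widim_\epsilon$ uniform in $N$. The two routes are equivalent via Proposition~3.1(2) and rest on the same structural fact (transverse motion pushes fibers to infinity), but your metric argument is more quantitative and arguably more transparent about \emph{why} only a bounded window of the tube contributes. One small correction: your claim that any two contributing fibers are $d^T$-separated by $1+2^{-|n_1-n_2|}>1$ fails for fibers whose index lies just outside the range of first coordinates occurring in the tube; fortunately no separation is needed, since the fibers are clopen and an $\epsilon$-embedding of a finite clopen partition into the disjoint union of the individual targets is automatically an $\epsilon$-embedding of $X$, so $\Widim_\epsilon(X,d^T)\le\max_n\Widim_\epsilon(\{n\}\times Y,d^T)$ regardless of how close distinct fibers are. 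With that repair the argument is complete.
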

\begin{proof}
We provide a constructive proof, following the idea of Example 6.6 in \cite{Boyle--Lind}. The case $\rho=0$ is trivial. We can suppose $\rho=1$. Indeed, we may replace (in the following argument) $[0,1]^\mathbb{Z}$ by $([0,1]^\mathbb{N})^\mathbb{Z}$ if $\rho=+\infty$; or by a subshift (even minimal \cite{Lindenstrauss--Weiss}) of some $([0,1]^d)^\mathbb{Z}$ (where $d>\rho$), whose mean dimension is equal to $\rho$, if $0<\rho<+\infty$. So we assume $\rho=1$ for simplicity.

We let $\sigma$ be the full shift on $[0,1]^\mathbb{Z}$, $\sigma\colon \mathbb{Z}\times[0,1]^\mathbb{Z}\to[0,1]^\mathbb{Z}$, $(n,(x_m)_{m\in\mathbb{Z}})\mapsto(x_{m+n})_{m\in\mathbb{Z}}$. We set $Y=[0,1]^\mathbb{Z}\times\mathbb{Z}$. We consider an action on $Y$ as follows:
\begin{equation}\label{equ:example1}
\mathbb{Z}^2\times Y\to Y,\quad(m,n)(x,i)=(\sigma^mx,n+i),\quad\forall m,n,i\in\mathbb{Z},\,\forall x\in[0,1]^\mathbb{Z}.
\end{equation}
Let $X=Y\cup\{\infty\}$ be the one-point compactification of $Y$. By defining $(m,n)\infty=\infty$ for all $(m,n)\in\mathbb{Z}^2$ (i.e. $\infty$ is a fixed point) we may extend the action \eqref{equ:example1} of $\mathbb{Z}^2$ on $Y$ continuously to a $\mathbb{Z}^2$-action $T$ on $X$.

In the remaining part of the proof we shall compute the directional mean dimension of $(X,\mathbb{Z}^2,T)$. We fix an arbitrary $\vec{v}=(v_1,v_2)\in\mathbb{S}$.
\medskip

\textbf{Case 1.}
Suppose $v_2\ne0$. We are going to prove $\mdim(X,T,\langle\vec{v}\rangle^{\perp})=0$.

In fact, for any finite open cover $\mathcal{U}$ of $X$ there exist $M\in\mathbb{N}$ sufficiently large and a finite open cover $\alpha$ of $[0,1]^\mathbb{Z}$ such that $\mathcal{V}$ refines $\mathcal{U}$, where $\mathcal{V}$ is a finite open cover of $X$ consisting of the following members:
\begin{align*}
&\bullet\quad\quad
A\times\{i\}\subset X,\quad\forall A\in\alpha,\,\forall i\in[-M,M],\\
&\bullet\quad\quad
[0,1]^\mathbb{Z}\times\left(\mathbb{Z}\setminus[-M,M]\right)\cup\{\infty\}\subset X.
\end{align*}
Hence $T^{(m,n)}\mathcal{V}$ ($(m,n)\in\mathbb{Z}^2$) consists of the following members:
\begin{align*}
&\bullet\quad\quad
\sigma^m(A)\times\{i+n\},\quad\forall A\in\alpha,\,\forall i\in[-M,M],\\
&\bullet\quad\quad
[0,1]^\mathbb{Z}\times\left(\mathbb{Z}\setminus[n-M,n+M]\right)\cup\{\infty\}.
\end{align*}
We thus observe for any $(m,n)\in\mathbb{Z}^2$ with $n\ge2M+1$ that $\mathcal{V}\vee T^{(m,n)}\mathcal{V}$ consists of the following members:
\begin{align*}
&\bullet\quad\quad
[0,1]^\mathbb{Z}\times\left(\mathbb{Z}\setminus([-M,M]\cup[n-M,n+M])\right)\cup\{\infty\},\\
&\bullet\quad\quad
A\times\{i\},\quad\forall A\in\alpha,\,\forall i\in[-M,M],\\
&\bullet\quad\quad
\sigma^m(A)\times\{i+n\},\quad\forall A\in\alpha,\,\forall i\in[-M,M].
\end{align*}
We note that any two elements coming from distinct lines (in the above three) are disjoint. So we have $\mathcal{D}(\mathcal{V})\le\mathcal{D}(\mathcal{V}\vee T^{(m,n)}\mathcal{V})\le\max\{\mathcal{D}(\mathcal{V}),\mathcal{D}(T^{(m,n)}\mathcal{V})\}=\mathcal{D}(\mathcal{V})$. Similarly, for any $(m,n)\in\mathbb{Z}^2$ with $n\le-2M-1$ we also have $\mathcal{D}(\mathcal{V}\vee T^{(m,n)}\mathcal{V})=\mathcal{D}(\mathcal{V})$. Therefore, by induction we deduce that if a finite subset $F$ of $\mathbb{Z}^2$ satisfies that $|q_1-q_2|\ge2M+1$ whenever $(p_1,q_1)\ne(p_2,q_2)\in F$, then $\mathcal{D}(\bigvee_{(p,q)\in F}T^{(p,q)}\mathcal{V})=\mathcal{D}(\mathcal{V})$. Thus for any $N\in\mathbb{N}$ we have $$\mathcal{D}(\bigvee_{(m,n)\in B_1(\langle\vec{v}\rangle^{\perp})\cap[-N,N]^2}T^{(m,n)}\mathcal{V})\le\left|B_1(\langle\vec{v}\rangle^{\perp})\cap[-M-1,M+1]^2\right|\cdot\mathcal{D}(\mathcal{V}).$$ 
Notice that we are using $r=1> \sqrt{2}/2$ in the definition of directional mean dimension. It follows that,  
\begin{align*}
\mdim(X,T,\langle\vec{v}\rangle^{\perp})
&=\sup_\mathcal{U}\liminf_{N\to+\infty}\frac{\mathcal{D}(\bigvee_{(m,n)\in B_1(\langle\vec{v}\rangle^{\perp})\cap[-N,N]^2}T^{(m,n)}\mathcal{U})}{\vol_1(\langle\vec{v}\rangle^{\perp}\cap[-N,N]^2)}\\
&\le\sup_\mathcal{U}\liminf_{N\to+\infty}\frac{\mathcal{D}(\bigvee_{(m,n)\in B_1(\langle\vec{v}\rangle^{\perp})\cap[-N,N]^2}T^{(m,n)}\mathcal{V})}{\vol_1(\langle\vec{v}\rangle^{\perp}\cap[-N,N]^2)}\\
&\le\sup_\mathcal{U}\liminf_{N\to+\infty}\frac{\left|B_1(\langle\vec{v}\rangle^{\perp})\cap[-M-1,M+1]^2\right|\cdot\mathcal{D}(\mathcal{V})}{\vol_1(\langle\vec{v}\rangle^{\perp}\cap[-N,N]^2)}\\
&=0.
\end{align*}

\textbf{Case 2.}
Suppose $v_2=0$. We are going to show $\mdim(X,T,\langle\vec{v}\rangle^{\perp})=1$.

In fact, by Proposition \ref{prop:ssss}.$(3)$,  we have

$$\mdim(X,T,\langle\vec{v}\rangle^{\perp})=\sup_{\alpha,M}\liminf_{N\to+\infty}\frac{\mathcal{D}(\bigvee_{n\in[-N,N]}T^{(n,0)}\mathcal{U}_{\alpha,M})}{2N+1},$$where $\mathcal{U}_{\alpha,M}$ is the finite open cover of $X$ consisting of the following members:
\begin{align*}
&\bullet\quad\quad
A\times\{i\},\quad\forall A\in\alpha,\,\forall i\in[-M,M],\\
&\bullet\quad\quad
[0,1]^\mathbb{Z}\times\left(\mathbb{Z}\setminus[-M,M]\right)\cup\{\infty\},
\end{align*}
and where $\alpha$ ranges over finite open covers of $[0,1]^\mathbb{Z}$ and $M$ runs over $\mathbb{N}$. Meanwhile, it is clear that $\mathcal{D}(\bigvee_{n\in[-N,N]}T^{(n,0)}\mathcal{U}_{\alpha,M})=\mathcal{D}(\bigvee_{n\in[-N,N]}\sigma^n\alpha)$. Thus we deduce
$$\mdim(X,T,\langle\vec{v}\rangle^{\perp})=\sup_\alpha\liminf_{N\to+\infty}\frac{\mathcal{D}(\bigvee_{n\in[-N,N]}\sigma^n\alpha)}{2N+1}=\mdim([0,1]^\mathbb{Z},\sigma)=1.$$\end{proof}

\section{Proof of Theorem \ref{maintheorem2}}
\begin{lemma}[{\cite[Theorems 1.25, 1.26]{Nadler2}}]\label{lem:continuumarc}
For any compact metric space $X$, $A\in C(X)$ and $a\in A$ there is a continuous mapping $\kappa:[0,1]\to C(X)$ such that $\kappa(0)=\{a\}$, $\kappa(1)=A$, and $\kappa(x)\subset\kappa(y)$ for all $0\le x\le y\le1$.
\end{lemma}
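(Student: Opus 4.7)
The plan is to realize $\kappa$ as the inverse of a Whitney map restricted to a maximal chain of subcontinua of $A$ containing $a$. First, I would construct a \emph{Whitney map}, i.e.\ a continuous $\mu:C(X)\to[0,\infty)$ with $\mu(\{x\})=0$ for all $x\in X$ and $\mu(B)<\mu(B')$ whenever $B\subsetneq B'$ in $C(X)$. A standard recipe: pick a countable collection $\{f_n\}_{n\ge 1}$ of continuous functions $X\to[0,1]$ separating points from closed sets, set $\mu_n(B):=\max_{x\in B}f_n(x)-\min_{x\in B}f_n(x)$, and put $\mu:=\sum_{n\ge 1}2^{-n}\mu_n$. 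After a harmless rescaling we may assume $\mu(A)=1$.

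Next, consider the compact subspace $\mathcal{F}:=\{B\in C(X):a\in B\subset A\}$ of $C(X)$, partially ordered by inclusion. By Zorn's lemma there is a chain (i.e.\ a totally ordered subset) $\mathcal{C}\subset\mathcal{F}$ containing both $\{a\}$ and $A$ that is maximal with this property. The set $\mathcal{C}$ is automatically closed in $C(X)$: limits in $C(X)$ of sequences from $\mathcal{C}$ still lie in $\mathcal{F}$ and remain inclusion-comparable with every $B\in\mathcal{C}$ (since the relation $B\subset B'$ is closed in $C(X)\times C(X)$), so by maximality they belong to $\mathcal{C}$. Hence $\mathcal{C}$ is compact. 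The restriction $\mu|_\mathcal{C}$ is injective by strict monotonicity of $\mu$, and compactness makes it a homeomorphism onto its image. Setting $\kappa:=(\mu|_\mathcal{C})^{-1}$ will then give a continuous map $[0,1]\to C(X)$ with $\kappa(0)=\{a\}$, $\kappa(1)=A$, and the required monotonicity, \emph{provided} $\mu(\mathcal{C})=[0,1]$.

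Proving this surjectivity is the main obstacle. Suppose towards a contradiction that some open interval $(s,t)\subset[0,1]\setminus\mu(\mathcal{C})$ lies between $s=\mu(B_-)$ and $t=\mu(B_+)$ with $B_-\subsetneq B_+$ consecutive in $\mathcal{C}$. I would then show that $\mathcal{C}$ can be enlarged, contradicting maximality, by exhibiting some $B\in C(X)$ with $B_-\subsetneq B\subsetneq B_+$. This rests on the classical fact that the ``interval'' $\mathcal{F}_{B_-,B_+}:=\{B\in C(X):B_-\subset B\subset B_+\}$ is itself a continuum in $C(X)$, a result usually proved by applying a second Whitney-map argument inside the hyperspace $C(B_+)$ (which is itself a continuum, being a compact connected metric space). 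Once this connectedness is granted, $\mu(\mathcal{F}_{B_-,B_+})$ is a connected subset of $[0,1]$ containing both $s$ and $t$, hence contains the whole interval $(s,t)$, and any preimage of an interior point yields the desired intermediate $B$, finishing the proof.
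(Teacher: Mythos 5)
The paper offers no proof of this lemma --- it quotes it from Nadler --- and your strategy (a Whitney map together with a maximal closed nest from $\{a\}$ to $A$) is precisely the classical one found in that reference. The construction of $\mu$ (strict monotonicity does use that members of $C(X)$ are connected, so that each $f_n(B)$ is an interval), the closedness and compactness of the maximal chain $\mathcal{C}$, the injectivity of $\mu|_{\mathcal{C}}$, and the reduction of the whole lemma to the surjectivity of $\mu|_{\mathcal{C}}$ are all correct. (The degenerate case $A=\{a\}$, where one cannot rescale to $\mu(A)=1$, should be dispatched separately by the constant map.)

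There is, however, a genuine gap at the final step, and it sits exactly where the topological content of the lemma lives. To rule out a gap $(s,t)$ in $\mu(\mathcal{C})$ you must exhibit a subcontinuum $B$ with $B_-\subsetneq B\subsetneq B_+$ for consecutive $B_-\subsetneq B_+$ in $\mathcal{C}$. You justify its existence by the connectedness of the order interval $\{B\in C(X):B_-\subset B\subset B_+\}$, but in the standard development that fact is a \emph{consequence} of the existence of order arcs, i.e.\ of the very lemma being proved, and a ``second Whitney-map argument inside $C(B_+)$'' runs into the identical order-density obstacle; as written the argument is circular. The non-circular ingredient is the boundary bumping lemma: choose an open set $U\supset B_-$ with $B_+\not\subset\overline{U}$, and let $B$ be the connected component of $B_+\cap\overline{U}$ containing $B_-$. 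Boundary bumping forces $B$ to meet the boundary of $U\cap B_+$ relative to $B_+$, so $B\supsetneq B_-$, while $B\subset\overline{U}$ gives $B\subsetneq B_+$. That no soft argument can produce such a $B$ is illustrated by taking $B_+$ to be the closed topologist's sine curve and $B_-$ its limit segment. With this substitution your proof is complete and agrees with the cited one.
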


\begin{lemma}\label{lem:decreasingclopen}
Let $X$ be a compact metric space and $C$ a connected component of $X$. Then there exists a sequence $\{F_n\}_{n=1}^\infty$ of clopen subsets of $X$ satisfying $F_1\supset F_2\supset\cdots\supset F_n\supset F_{n+1}\supset\cdots\supset\bigcap_{n=1}^\infty F_n=C$.
\end{lemma}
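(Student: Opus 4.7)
The plan is to invoke the classical fact that in a compact Hausdorff space the connected component of a point coincides with its quasi-component, i.e.\ with the intersection of all clopen subsets of $X$ containing that point. From this I will derive an approximation statement to the effect that every open neighborhood of $C$ contains a clopen neighborhood of $C$, and then obtain the required sequence $\{F_n\}$ by intersecting finitely many such clopen neighborhoods that shrink metrically down to $C$.

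First I would prove the following approximation lemma: for every open set $U \supseteq C$ there exists a clopen set $F$ with $C \subseteq F \subseteq U$. Fix any $c \in C$ and any $x \in X \setminus U$. Since the component and quasi-component of $c$ coincide and $x \notin C$, there is a clopen set $V_x$ with $c \in V_x$ and $x \notin V_x$; because $C$ is connected, $V_x$ is clopen, and $V_x \cap C \ni c$, connectedness forces $C \subseteq V_x$. The family $\{X \setminus V_x : x \in X \setminus U\}$ is then an open cover of the compact set $X \setminus U$. Extracting a finite subcover indexed by $x_1,\dots,x_n$, the set $F := V_{x_1}\cap\cdots\cap V_{x_n}$ is clopen, contains $C$, and is contained in $U$, as required.

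To finish, fix a compatible metric $d$ on $X$ and set $U_n := \{x \in X : d(x,C) < 1/n\}$, which is an open neighborhood of $C$. The approximation lemma yields a clopen set $F_n' \subseteq U_n$ containing $C$. Defining $F_n := F_1' \cap F_2' \cap \cdots \cap F_n'$ produces a decreasing sequence of clopen sets, each containing $C$ and contained in $U_n$; hence $C \subseteq \bigcap_{n=1}^\infty F_n \subseteq \bigcap_{n=1}^\infty U_n = C$, where the last equality uses that $C$ is closed.

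The only nontrivial input is the coincidence of component and quasi-component in compact Hausdorff spaces; this is completely standard and can be quoted without further comment, so I do not anticipate a real obstacle in executing this plan.
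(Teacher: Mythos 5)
Your proof is correct and follows essentially the same route as the paper's: both arguments rest on the classical fact that in a compact (metric) space the connected component equals the quasi-component, use connectedness of $C$ to upgrade a clopen neighborhood of a point of $C$ to a clopen neighborhood of all of $C$, and then extract a countable decreasing family by a covering argument. The only cosmetic difference is that you obtain countability via the metric $1/n$-neighborhoods of $C$ and compactness of their complements, whereas the paper invokes the Lindel\"of property of $X\setminus C$ directly; both are fine.
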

\begin{proof}
Without loss of generality we assume $C\ne X$. Fix $x\in C$. We denote by $\mathcal{Q}_x$ the set of all clopen neighbourhoods of $x$ in $X$ and let $Q=\bigcap_{V\in\mathcal{Q}_x}V$ be the {\em quasi-component} of $x$. It follows from classical results that $C=Q$ (see for instance \cite[\S47, Section II, Theorem 2]{Kuratowski}; we thank the anonymous referee for pointing us to this reference)·

Since $C=Q$, for every $a\in X\setminus C$ there must be a clopen subset $C_a$ of $X$ such that $C\subset C_a$ and $a\notin C_a$. This implies that there is an open neighbourhood $U_a$ of $a$ in $X$ with $U_a\subset X\setminus C_a\subset X\setminus C$. We notice that $X\setminus C$ is a Lindel\"of space and that $\{U_a:a\in X\setminus C\}$ is an open cover of $X\setminus C$. So we can find $\{a_i\}_{i=1}^\infty\subset X\setminus C$ such that $X\setminus C=\bigcup_{i=1}^\infty U_{a_i}$. Let $F=\bigcap_{i=1}^\infty C_{a_i}$. Since for every $c\in X\setminus C$ there is some $i\in\mathbb{N}$ with $c\in U_{a_i}\subset X\setminus C_{a_i}\subset X\setminus F$, we see $F\subset C$. Thus, $F=C$. For each $n\in\mathbb{N}$ we set $F_n=\bigcap_{i=1}^nC_{a_i}$. It is simple to check that $\{F_n\}_{n=1}^\infty$ is as required.
\end{proof}
Let $(X,\mathbb{Z}^k,T)$ be a continuum-wise expansive $\mathbb{Z}^k$-action with an expansivity constant $2c>0$.
\begin{lemma}\label{prop:contexpansive}
(1) There is $\delta>0$ such that if $A\in C(X)$ and $N\ge1$ satisfy the condition $c\le\max\{\diam T^n(A):n\in[-N,N]^k\}\le2c$ then $\max\{\diam T^n(A):n\in\partial[-N,N]^k\}>\delta$.

(2) For any $\epsilon>0$ there is $m=m(\epsilon)>0$ such that if $A\in C(X)$ satisfies the condition $\max\{\diam T^n(A):n\in[-m,m]^k\}\le2c$ then $\diam A<\epsilon$.
\end{lemma}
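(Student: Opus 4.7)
The plan is to prove both statements by contradiction, exploiting the compactness of the hyperspace $C(X)$ with the Hausdorff metric, the continuity of the diameter and of each map $T^n$ on $2^X$, and the hypothesis that $2c$ is an expansivity constant of $(X,\mathbb{Z}^k,T)$. In each case one extracts, from a sequence of alleged counterexamples, a Hausdorff limit $A\in C(X)$ that is nondegenerate yet satisfies $\diam T^n(A)\le 2c$ for every $n\in\mathbb{Z}^k$, contradicting continuum-wise expansiveness.

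For (2), suppose that for some $\epsilon>0$ one can find, for every integer $m\ge 1$, a continuum $A_m\in C(X)$ with $\diam A_m\ge\epsilon$ yet $\max_{n\in[-m,m]^k}\diam T^n(A_m)\le 2c$. By compactness of $C(X)$, a subsequence of $\{A_m\}$ converges in the Hausdorff metric to some $A\in C(X)$. Since diameter is continuous on $2^X$, $\diam A\ge\epsilon$, so $A$ is nondegenerate. For any fixed $n\in\mathbb{Z}^k$ one has $n\in[-m,m]^k$ for all large $m$, so $\diam T^n(A_m)\le 2c$ eventually, and continuity of $T^n$ gives $\diam T^n(A)\le 2c$, contradicting the expansivity of $2c$.

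For (1), suppose no such $\delta$ works. Then for every $j\ge 1$ there exist $A_j\in C(X)$ and $N_j\ge 1$ satisfying the hypothesis but with $\max_{n\in\partial[-N_j,N_j]^k}\diam T^n(A_j)\le 1/j$. Choose $n_j^*\in[-N_j,N_j]^k$ attaining the maximum of $\diam T^n(A_j)$ on the box, and set $B_j=T^{n_j^*}(A_j)$, so that $c\le\diam B_j\le 2c$ and $\diam T^m(B_j)\le 2c$ whenever $n_j^*+m\in[-N_j,N_j]^k$. The main step, which I expect to be the principal obstacle, is to show that the $\ell^\infty$-distance $d_j$ from $n_j^*$ to $\partial[-N_j,N_j]^k$ tends to infinity. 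Indeed, if along some subsequence $d_j$ were bounded, one could find $m_j\in\mathbb{Z}^k$ with $\|m_j\|_\infty$ bounded and $n_j^*+m_j\in\partial[-N_j,N_j]^k$; refining so that $m_j\equiv m$ is constant, one would get $\diam T^m(B_j)=\diam T^{n_j^*+m}(A_j)\le 1/j\to 0$. Extracting a Hausdorff limit $B$ of a subsequence of $\{B_j\}$, continuity of $T^m$ and of the diameter would force $T^m(B)$, and hence $B$ itself, to be a singleton, contradicting $\diam B\ge c$.

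Once $d_j\to\infty$ is established, any fixed $m\in\mathbb{Z}^k$ eventually lies in $[-N_j,N_j]^k-n_j^*$, so $\diam T^m(B_j)\le 2c$ for large $j$. A Hausdorff limit $B$ of a subsequence of $\{B_j\}$ is then a nondegenerate continuum (since $\diam B\ge c$) satisfying $\diam T^m(B)\le 2c$ for every $m\in\mathbb{Z}^k$, which again contradicts the fact that $2c$ is an expansivity constant of $(X,\mathbb{Z}^k,T)$.
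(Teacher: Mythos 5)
Your proposal is correct and follows essentially the same strategy as the paper: argue by contradiction, pass to the translates $T^{n_j^*}(A_j)$ of the maximizing position, show the distance from $n_j^*$ to $\partial[-N_j,N_j]^k$ tends to infinity, and extract a nondegenerate Hausdorff limit violating the expansivity constant $2c$. The only (harmless) variation is in the key sub-claim: the paper bounds $\diam T^{n_j^*}(A_j)=\diam T^{p_j}(T^{q_j}(A_j))<c/2$ directly via equicontinuity of $\{T^p:p\in[-l,l]^k\}$, whereas you pass to a limit $B$ and use that a homeomorphism cannot send a nondegenerate continuum to a singleton -- which also absorbs the bounded-$N_j$ case that the paper treats separately.
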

\begin{proof}
(1) Suppose that the statement is false. There exist $\{A_i\}_{i\in\mathbb{N}}\subset C(X)$ and $\{N_i\}_{i\in\mathbb{N}}\subset\mathbb{N}$ satisfying that
\begin{equation}\label{equ:diampart}
c\le\max\{\diam T^n(A_i):n\in[-N_i,N_i]^k\}\le2c,\quad\forall i\in\mathbb{N};
\end{equation}
\begin{equation}\label{equ:diampartial}
\max\{\diam T^n(A_i):n\in\partial[-N_i,N_i]^k\}\le1/i,\quad\forall i\in\mathbb{N}.
\end{equation}
Suppose that the sequence $\{N_i\}_{i\in\mathbb{N}}\subset\mathbb{N}$ is bounded by some $0<M<+\infty$. We may assume (by the compactness of $C(X)$) that $\{A_i\}_{i\in\mathbb{N}}$ converges to some $A\in C(X)$. Hence \eqref{equ:diampart} and \eqref{equ:diampartial} lead to a contradiction for $\diam T^n(A)$ for some $n\in[-M,M]^k$. So we assume without loss of generality (by taking a subsequence) that $N_i\to+\infty$ as $i\to\infty$.

We take a sequence $\{n_i\}_{i\in\mathbb{N}}$, with $n_i\in [-N_i,N_i]^k$ and such that $\diam T^{n_i}(A_i) =  \max \{\diam  T^n(A_i):n\in[-N_i,N_i]^k\}$ for all $i\in\mathbb{N}$. We claim that the distance between $n_i$ and $\partial[-N_i,N_i]^k$ goes to $+\infty$ as $i\to\infty$. In fact, if this assertion is not true, then there exists $l\in\mathbb{N}$ such that $n_i\notin[-N_i+l,N_i-l]^k$ for infinitely many $i\in\mathbb{N}$. For all these $i\in\mathbb{N}$ we write $n_i=p_i+q_i$ with $p_i\in[-l,l]^k$ and $q_i\in\partial[-N_i,N_i]^k$. We take $\epsilon>0$ sufficiently small such that $\diam T^n(A)<c/2$ for all $n\in[-l,l]^k$ whenever $\diam A<\epsilon$. It follows from \eqref{equ:diampartial} that $\diam T^{n_i}(A_i)=\diam T^{p_i}(T^{q_i}(A_i))<c/2$ for all sufficiently large $i$ (with $1/i<\epsilon$), which contradicts \eqref{equ:diampart}, and thus proves the claim.

We assume (by choosing a subsequence) that $T^{n_i}(A_i)$ tends to $B\in C(X)$ as $i\to\infty$. For any given $m\in\mathbb{Z}^k$ there is some $i_0\in\mathbb{N}$ such that $m+n_i\in[-N_i,N_i]^k$, for all $i\ge i_0$, which, together with \eqref{equ:diampart}, implies $\diam T^{m+n_i}(A_i)\le\max\{\diam T^n(A_i):n\in[-N_i,N_i]^k\}\le2c$. Letting $i\to\infty$ in the above inequality we obtain $\diam T^m(B)\le2c$. Since $B$ is nondegenerate, this is a contradiction.

(2) Suppose that the statement is false. There exist $\epsilon_0>0$ and a sequence $\{A_i\}_{i\in\mathbb{N}}\subset C(X)$ such that $\max\{\diam T^n(A_i):n\in[-i,i]^k\}\le2c$ and $\diam A_i\ge\epsilon_0$ for all $i\in\mathbb{N}$. Choose a subsequence of $\{A_i\}_{i\in\mathbb{N}}$ tending to some $A\in C(X)$. Clearly, $\diam T^n(A)\le2c$ for all $n\in\mathbb{Z}^k$, and $\diam A\ge\epsilon_0$. This is a contradiction.
\end{proof}
\begin{lemma}\label{prop:bounded}
There is a constant $K>0$ and a sequence $\{\mathcal{W}_N\}_{N\ge1}$ of finite closed covers of $X$ satisfying that $\mesh(\mathcal{W}_N,d^T_{[-N,N]^k})<2c$ and $\ord(\mathcal{W}_N)\le KN^{k-1}$, for all $N\ge1$.
\end{lemma}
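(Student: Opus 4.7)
The plan is to build $\mathcal{W}_N$ as a refinement of a join indexed over the boundary lattice $\partial[-N,N]^k$ (whose cardinality is $O(N^{k-1})$), using continuum-wise expansiveness via Lemma \ref{prop:contexpansive}(1) to convert smallness of iterates on the boundary into a bona fide $d^T_{[-N,N]^k}$-mesh bound. Concretely, let $\delta > 0$ be the constant provided by Lemma \ref{prop:contexpansive}(1) and fix a finite open cover $\mathcal{U}_0$ of $X$ with $\mesh(\mathcal{U}_0, d) < \delta$. Put $\alpha_N := \bigvee_{n \in \partial[-N,N]^k} T^n \mathcal{U}_0$. Since $\mathcal{D}(T^n \mathcal{U}_0) = \mathcal{D}(\mathcal{U}_0) < +\infty$ for each $n$, iterating Lemma \ref{lem:subadditivedegree} gives $\mathcal{D}(\alpha_N) \le |\partial[-N,N]^k| \cdot \mathcal{D}(\mathcal{U}_0) \le K N^{k-1}$ for a constant $K > 0$ independent of $N$. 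By Lemma \ref{lem:orderclosedcover}(1) there is then a finite closed cover $\tilde\beta_N$ of $X$ refining $\alpha_N$ with $\ord(\tilde\beta_N) \le K N^{k-1}$.

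The crucial step is to show that every continuum $C$ contained in any $W \in \tilde\beta_N$ satisfies $\max_{n \in [-N,N]^k} \diam T^n(C) < c$. Since $W$ is contained in some element of $\alpha_N$, one has $\diam T^n(W) < \delta$ for all $n \in \partial[-N,N]^k$. Pick any $x_0 \in C$ and apply Lemma \ref{lem:continuumarc} to get a continuous $\kappa : [0,1] \to C(X)$ with $\kappa(0) = \{x_0\}$, $\kappa(1) = C$, and $\kappa(s) \subset \kappa(t)$ whenever $s \le t$. The function $\phi(t) := \max_{n \in [-N,N]^k} \diam T^n(\kappa(t))$ is continuous on $[0,1]$ with $\phi(0) = 0$; were $\phi(1) \ge c$, the intermediate value theorem would produce $t^*$ with $\phi(t^*) = c \in [c, 2c]$, and then Lemma \ref{prop:contexpansive}(1) applied to $\kappa(t^*)$ would yield $\max_{n \in \partial[-N,N]^k} \diam T^n(\kappa(t^*)) > \delta$, contradicting $\kappa(t^*) \subset W$. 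Hence $\phi(1) < c$, so in particular every connected component of $W$ has $d^T_{[-N,N]^k}$-diameter strictly less than $c$.

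To assemble $\mathcal{W}_N$, for each connected component $C$ of each $W$ apply Lemma \ref{lem:decreasingclopen} inside $W$ to obtain clopen-in-$W$ sets shrinking to $C$; since $d^T_{[-N,N]^k}$ is a compatible metric, the convergence also holds in the $d^T_{[-N,N]^k}$-Hausdorff sense, so we may select one $F(C)$ with $\diam_{d^T_{[-N,N]^k}} F(C) < 2c$. A finite subcover of the open-in-$W$ family $\{F(C)\}_C$, disjointified by successive set differences (which remain clopen in $W$), produces a clopen-in-$W$ partition of $W$ into finitely many pieces, each of $d^T_{[-N,N]^k}$-diameter $< 2c$. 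Define $\mathcal{W}_N$ to be the union of these partitions over all $W \in \tilde\beta_N$. Then $\mesh(\mathcal{W}_N, d^T_{[-N,N]^k}) < 2c$ by construction, and because every $x \in X$ belongs to at most $\ord(\tilde\beta_N) + 1$ members of $\tilde\beta_N$ but to exactly one piece of each corresponding partition, $\ord(\mathcal{W}_N) \le \ord(\tilde\beta_N) \le K N^{k-1}$. The main obstacle is the crucial step above: using Lemma \ref{lem:continuumarc} to parametrise continua inside $W$ and then invoking Lemma \ref{prop:contexpansive}(1) to convert boundary smallness into an interior diameter bound. The subadditivity computation and the clopen partition assembly are by comparison essentially routine.
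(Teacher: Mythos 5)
Your proposal is correct and follows essentially the same route as the paper's proof: a join over $\partial[-N,N]^k$ with degree $O(N^{k-1})$, a closed refinement, the arc-in-hyperspace lemma plus the intermediate value theorem combined with Lemma \ref{prop:contexpansive}(1) to bound the $d^T_{[-N,N]^k}$-diameter of connected components, and finally the clopen approximation of components with disjointification. The only differences (using $T^n$ in place of $T^{-n}$, mesh $\delta$ in place of $\delta/2$, and target value $c$ in place of $3c/2$ in the intermediate value argument) are immaterial.
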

\begin{proof}
Let $\delta>0$ be a constant introduced in Lemma \ref{prop:contexpansive} (1). Choose a finite open cover $\mathcal{U}=\{U_1,\dots,U_L\}$ of $X$ with $\mesh(\mathcal{U},d)<\delta/2$. For each $N\ge1$ we let $\mathcal{U}_N$ be an open cover of $X$ defined by $\mathcal{U}_N=\bigvee_{u\in\partial[-N,N]^k}T^{-u}\mathcal{U}$. By Lemma \ref{lem:subadditivedegree}
\begin{equation}\label{equ:boundedorder}
\mathcal{D}(\mathcal{U}_N)\le|\partial[-N,N]^k|\cdot\mathcal{D}(\mathcal{U})\le2^k(2N+1)^{k-1}L,\quad\forall N\ge1.
\end{equation}
By Lemma \ref{lem:orderclosedcover}, for each $N\ge1$ there is a finite closed cover $\mathcal{V}_N$ of $X$, which refines $\mathcal{U}_N$ and which satisfies $\ord(\mathcal{V}_N)=\mathcal{D}(\mathcal{U}_N)$. For every $V\in\mathcal{V}_N$ we denote by $\mathcal{C}_V$ the set of all connected components of $V$. We set $\mathcal{C}_N=\{C\subset V:V\in\mathcal{V}_N,C\in\mathcal{C}_V\}$ and note that the sets in $\mathcal{C}_V$ are pairwise disjoint. The set $\mathcal{C}_N$ is a closed (not necessarily finite) cover of $X$, which refines $\mathcal{V}_N$ and satisfies $\ord(\mathcal{C}_N)=\ord(\mathcal{V}_N)$.
\newline\textsf{Claim 1.}
For every $N\ge1$ we have $\mesh(\mathcal{C}_N,d^T_{[-N,N]^k})<2c$.
\newline\textit{Proof of Claim 1.}
If $\mesh(\mathcal{C}_N,d^T_{[-N,N]^k})\ge2c$ for some $N\ge1$, then there exists $C\in\mathcal{C}_N$ such that $\max\{\diam T^n(C):n\in[-N,N]^k\}\ge2c$ which implies that $d^T_{[-N,N]^k}(x,y)\ge2c$ for some $x,y\in C$. We assume $C\in\mathcal{C}_V$ for some $V\in\mathcal{V}_N$. By Lemma \ref{lem:continuumarc}, there is a continuous mapping $\kappa\colon [0,1]\to C(V)$ satisfying $\kappa(0)=\{x\}$, $\kappa(1)=C$, and $\kappa(s)\subset\kappa(t)$ whenever $0\le s\le t\le1$. We define a continuous mapping $F\colon [0,1]\to[0,+\infty)$ by sending $t$ to $\max\{\diam T^n(\kappa(t)):n\in[-N,N]^k\}$. Clearly, $F(0)=0$ and $F(1)\ge2c$. So there is some $0<t_0<1$ with $F(t_0)=3c/2$. By Lemma \ref{prop:contexpansive} (1) we get $\max\{\diam T^n(\kappa(t_0)):n\in\partial[-N,N]^k\}>\delta$ and hence $\max\{\diam T^n(C):n\in\partial[-N,N]^k\}>\delta$. However, since $\mathcal{C}_N$ refines $\mathcal{U}_N$ and since $\mesh(\mathcal{U}_N,d_{\partial[-N,N]^k}^T)<\delta/2$, we have $\max\{\diam T^n(C):n\in\partial[-N,N]^k\}<\delta/2$, a contradiction. This shows Claim 1.
\newline\textsf{Claim 2.}
For every $N\ge1$ there exists a finite closed cover $\mathcal{W}_N$ of $X$ satisfying that $\mesh(\mathcal{W}_N,d^T_{[-N,N]^k})<2c$ and $\ord(\mathcal{W}_N)=\ord(\mathcal{V}_N)$.
\newline\textit{Proof of Claim 2.}
We fix $N\ge1$. We take an arbitrary $V\in\mathcal{V}_N$ for the moment. By Claim 1, $\mesh(\mathcal{C}_V,d^T_{[-N,N]^k})<2c$. Applying Lemma \ref{lem:decreasingclopen} to each connected component $C$ of $V$ we find a clopen subset $F_C\supset C$ of $V$, whose diameter is strictly smaller than $2c$ with respect to $d^T_{[-N,N]^k}$. So we obtain a clopen cover $\{F_C:C\in\mathcal{C}_V\}$ of $V$. Since $V$ is compact, there is a finite subcover $\{F_{C_1},F_{C_2},\dots,F_{C_{i(V)}}\}$ of $\{F_C:C\in\mathcal{C}_V\}$. We set $W_1^V=F_{C_1}$, $W_2^V=F_{C_2}\setminus F_{C_1}$, $\dots$, $W_{i(V)}^V=F_{C_{i(V)}}\setminus\cup_{j=1}^{i(V)-1}F_{C_j}$. It follows that $\{W_1^V,W_2^V,\dots,W_{i(V)}^V\}$ is a finite closed cover of $V$ satisfying that $W_s^V\cap W_t^V=\emptyset$ for all $1\le s<t\le i(V)$. Thus, we know that $\mathcal{W}_N=\{W_j^V:1\le j\le i(V),V\in\mathcal{V}_N\}$ is a finite closed cover of $X$. This shows Claim 2.
\newline Now by \eqref{equ:boundedorder} and noting that $\ord(\mathcal{W}_N)\le\mathcal{D}(\mathcal{U}_N)$ we finish the proof of the lemma.
\end{proof}

\begin{proof}[Proof of Theorem \ref{maintheorem2}]
With the above preparation we are now ready to prove Theorem \ref{maintheorem2}. We fix a nonzero vector $\vec{v}\in\mathbb{R}^k$ and $r>\frac{\sqrt{k}}{2}$. We take $\epsilon^\prime>0$ arbitrarily. We choose a finite open cover $\alpha$ of $X$, which consists of open balls of diameter smaller than $\epsilon^\prime$ and which has a Lebesgue number $\epsilon>0$, with respect to the distance $d$ on $X$. Note that we ensure $\epsilon<\epsilon^\prime$. Applying Lemma \ref{prop:contexpansive} (2) to $\epsilon>0$, there exists $m=m(\epsilon)>0$ such that for $A\in C(X)$, if $\max\{\diam T^{u+u^\prime}(A):u\in B_r(\langle\vec{v}\rangle^{\perp})\cap[-N,N]^k,u^\prime\in[-m,m]^k\}\le2c$ then $\max\{\diam T^u(A):u\in B_r(\langle\vec{v}\rangle^{\perp})\cap[-N,N]^k\}<\epsilon$. By Lemma \ref{prop:bounded}, there is some constant $K>0$ such that for every $N\in\mathbb{N}$ we can find a finite closed cover $\mathcal{W}_{N+m}=\{W_i\}_{i\in I_N}$ of $X$ satisfying $\mesh(\mathcal{W}_{N+m},d^T_{[-N-m,N+m]^k})<2c$ and $\ord(\mathcal{W}_{N+m})<K\cdot(N+m)^{k-1}$. For each $i\in I_N$ we denote by $\mathcal{P}_i$ the set of all connected components of $W_i$. We put $\mathcal{C}_N=\bigcup_{i\in I_N}\mathcal{P}_i$. We notice that connected subsets in $W_i$ ($i\in I_N$) are also connected in $X$. It follows that $\ord(\mathcal{C}_N)=\ord(\mathcal{W}_{N+m})$ and $\mesh(\mathcal{C}_N,d_{B_r(\langle\vec{v}\rangle^{\perp})\cap[-N,N]^k}^T)<\epsilon$. By a similar argument as in Claim 2 (in Lemma \ref{prop:bounded} above) we can find a finite closed cover $\mathcal{C}^\prime_N$ of $X$ satisfying that $\ord(\mathcal{C}^\prime_N)=\ord(\mathcal{C}_N)$ and $\mesh(\mathcal{C}^\prime_N,d_{B_r(\langle\vec{v}\rangle^{\perp})\cap[-N,N]^k}^T)<\epsilon$. Clearly, $\mathcal{C}_N^\prime$ refines $\bigvee_{n\in B_r(\langle\vec{v}\rangle^{\perp})\cap[-N,N]^k}T^n\alpha$. It follows from Lemma \ref{lem:orderclosedcover} that $\mathcal{D}(\bigvee_{n\in B_r(\langle\vec{v}\rangle^{\perp})\cap[-N,N]^k}T^n\alpha)\le\ord(\mathcal{C}^\prime_N)<K\cdot(N+m)^{k-1}$ and thus 
 $$\Widim_{\epsilon^\prime}(X,d_{B_r(\langle\vec{v}\rangle^{\perp})\cap[-N,N]^k}^T)\le\ord(\mathcal{C}^\prime_N)<K\cdot(N+m)^{k-1}.$$ 

We remark that the constant $K$ depends only on $(X,\mathbb{Z}^k,T)$. Since $m$ is independent of $N$ and since $\epsilon^\prime>0$ is arbitrary, the definition \eqref{equ:definitiondirectmdim} allows us to conclude.
\end{proof}

To end this paper we remark that we also have the following result which unifies Meyerovitch and Tsukamoto's theorem and Kato's theorem and which does \textit{not} include Theorem \ref{maintheorem2}. We omit its proof because it is highly similar to Theorem \ref{maintheorem2} and the previously-known results.
\begin{itemize}\item
Let $k$ be a positive integer. If $(X,\mathbb{Z}^k,T)$ is a continuum-wise expansive $\mathbb{Z}^k$-action and if a $\mathbb{Z}^{k-1}$-action $(X,\mathbb{Z}^{k-1},R)$ satisfies that $R\colon \mathbb{Z}^{k-1}\times X\to X$ commutes with $T\colon \mathbb{Z}^k\times X\to X$ \textup(namely $T^m\circ R^n(x)=R^n\circ T^m(x)$ for all $m\in\mathbb{Z}^k$, $n\in\mathbb{Z}^{k-1}$ and $x\in X$\textup), then $(X,\mathbb{Z}^{k-1},R)$ has finite mean dimension.
\end{itemize}



\end{document}